\documentclass[12pt,reqno]{amsart}
\usepackage{amsmath,amsthm,amscd,amsfonts,amssymb,enumerate}
\usepackage{graphicx,color}
\usepackage{multirow}
\usepackage{mathrsfs, cite, url}
\usepackage[colorlinks=true,allcolors=blue]{hyperref}

\setlength{\oddsidemargin}{10pt} \setlength{\evensidemargin}{10pt}
\setlength{\textwidth}{5.8in}
\vfuzz2pt 
\hfuzz2pt 
\newtheorem{thm}{Theorem}[section]
\newtheorem{cor}[thm]{Corollary}
\newtheorem{lem}[thm]{Lemma}

\theoremstyle{definition}

\theoremstyle{remark}

\numberwithin{equation}{section}

\begin{document}

\title[Variant Euler harmonic  sums]{Four types of variant Euler harmonic sums}
\author[N. Batir and J. Choi]{Necdet Batir and Junesang Choi$^{*}$}
\address{Necdet Batir: Department of  Mathematics, Nev{\c{s}}ehir hbv University, Nev{\c{s}}ehir, 50300 Turkey}
\email{nbatir@hotmail.com}
\address{Junesang Choi: Department of Mathematics, Dongguk University,
Gyeongju 38066, Republic of Korea}
\email{junesang@dongguk.ac.kr}

\thanks{$^{*}$ Corresponding author}
\subjclass[2020]{11B65, 11M06,  26B15, 30B40, 30D05,   33B15,   40A05, 40A10, 40B05,  65B10}
\keywords{Gamma function; Beta function; psi function; polygamma function; Riemann zeta function; generalized zeta function;
 harmonic numbers; generalized harmonic numbers; linear Euler harmonic sums; nonlinear Euler harmonic sums;
  variant Euler harmonic sums; series involving zeta functions}


\begin{abstract} We aim to investigate the four types of variant Euler harmonic sums.
Also, as corollaries, we provide particular examples of our core findings, some of whose further instances are evaluated in terms of basic and well-known functions as well as certain mathematical constants. We explore relevant linkages between our results and those of other previously established studies. An examination of a specific case of one result shows a relationship to series involving zeta functions, which is also a popular area of research.
\end{abstract}
\maketitle

\section{Introduction}\label{Sec-IP}

Certain required functions, numbers and notations are recalled and given. The gamma function $\Gamma$ is given by
\begin{equation}\label{GF}
 \Gamma(z)=\int_{0}^\infty\, t^{z-1}e^{-t}\, dt \quad (\Re(z)>0).
\end{equation}
The  Beta function  $B(\mu,\, \nu)$ is given as follows (see, e.g., \cite[p. 8]{Sr-Ch-12}):
 \begin{equation}\label{beta}
  B(\mu,\, \nu) = \left\{ \aligned & \int_0^1 \, t^{\mu -1} (1-t)^{\nu -1} \, dt
    \quad (\Re(\mu)>0, \,\, \Re(\nu)>0) \\
    &\frac{\Gamma (\mu) \, \Gamma (\nu)}{\Gamma (\mu+ \nu)}
   \hskip 23mm  \left(\mu,\, \nu
      \in \mathbb{C}\setminus {\mathbb Z}_{\leqslant 0} \right). \endaligned \right.
\end{equation}

The psi (or digamma) function $\psi$ is defined by
\begin{equation}\label{Psi-F}
  \psi (z): = \frac{d}{dz} \log  \Gamma(z) = \frac{\Gamma'(z)}{\Gamma(z)} \quad \left(z \in \mathbb{C} \setminus \mathbb{Z}_{\leqslant 0}\right).
\end{equation}
The polygamma function $\psi ^{\left( k\right) }(z)$ is  defined by
\begin{equation}\label{polygamma}
\aligned
\psi ^{\left( k\right) }(z)&:=\frac{d^{k}}{dz^{k}}\{\psi (z)\}=\left(
-1\right)^{k+1}k!\,\sum_{r=0}^{\infty }\frac{1}{\left( r+z\right)^{k+1}}\\
&=\left(-1\right)^{k+1}k!\,\zeta(k+1,z) \quad \left(k \in \mathbb{N},\,\, z \in \mathbb{C} \setminus \mathbb{Z}_{\leqslant 0}\right),
\endaligned
\end{equation}
where $\psi ^{\left(0\right) }(z)=\psi(z)$, and $\zeta(s,z)$ is the generalized (or Hurwitz) zeta function defined by
\begin{equation}\label{GZF}
  \zeta(s,z) = \sum_{j=0}^{\infty}\,\frac{1}{(j+z)^s} \quad \left(\Re(s)>1,\, z \in \mathbb{C} \setminus \mathbb{Z}_{\leqslant 0}\right),
\end{equation}
and $\zeta(s,1) =:\zeta(s)$ is the Riemann zeta function.
 It has the recurrence
\begin{equation}\label{Polyg-rr}
  \psi ^{\left( k\right) }(z+1)=\psi ^{\left( k\right) }(z)+\frac{\left(
-1\right)^{k}k!}{z^{k+1}} \quad \left(k \in \mathbb{Z}_{\geqslant 0}\right).
\end{equation}
Here and in the following, let $\mathbb{C}$,  $\mathbb{R}$, $\mathbb{R}_{>0}$,  $\mathbb{Z}$, and $\mathbb{N}$ denote the sets of complex numbers, real numbers, positive real numbers,   integers, and positive integers, respectively. Also let $\mathbb{Z}_{\geqslant \ell}$ and  $\mathbb{Z}_{\leqslant \ell}$ denote the sets of integers greater than or equal to $\ell$ and less than or equal to $\ell$ for some $\ell \in \mathbb{Z}$.
For more properties and formulas of the above functions, one may refer to
  \cite[Sections 1.1, 1.3 and 2.2, 2.3]{Sr-Ch-12}.

The generalized harmonic numbers $H_n^{(s)}$  of order $s$ are defined by
\begin{equation}\label{GHN}
 H_n^{(s)}:=\sum\limits_{k=1}^{n}\frac{1}{k^s} \quad \left(n \in \mathbb{N},\, s \in \mathbb{C}\right),
\end{equation}
where $H_n^{(1)}=:H_n$ $(n \in \mathbb{N})$ are harmonic numbers, and   $H_0^{(s)}=0=H_0$.
Here and elsewhere, an empty sum is assumed to be nil. The following relations are recalled:
  \begin{equation}\label{e:1}
   H_n=\gamma+\psi(n+1)  \quad \left(n \in \mathbb{Z}_{\geqslant 0}  \right),
  \end{equation}
where $\gamma$ is the Euler-Mascheroni constant (see, e.g., \cite[Section 1.2]{Sr-Ch-12});
\begin{equation}\label{e:2}
H_n^{(m+1)}=\zeta(m+1)+\frac{(-1)^m}{m!}\psi^{(m)}(n+1)\quad \left(m\in \mathbb{N},\, n \in \mathbb{Z}_{\geqslant 0}\right)
\end{equation}
(see, e.g.,  \cite[Eq. (1.25)]{Alzer} and \eqref{polygamma}).
Equations  \eqref{e:1} and \eqref{e:2} are utilized to define
extended harmonic numbers $H_\eta^{(m)}$ of order $m \in \mathbb{N}$
  with index $\eta \in \mathbb{C} \setminus \mathbb{Z}_{\leqslant -1}$
 by  (see \cite{SofoSri})
\begin{equation}\label{G-HN}
 H_\eta^{(m)}:=\left\{
                   \begin{array}{ll}
       \gamma + \psi (\eta+1)               & (m=1), \\
      \zeta (m) +  \frac{(-1)^{m-1}}{(m-1)!} \, \psi^{(m-1)} (\eta+1)  & (m \in \mathbb{Z}_{\geqslant 2}).
                   \end{array}
                 \right.
\end{equation}

A generalized binomial coefficient  $\binom{s}{t}$ ($s,\, t\in\mathbb{C}$) is defined, in terms of the gamma functions, by
\begin{equation}\label{GBC}
\binom{s}{t}:=\frac{\Gamma(s+1)}{\Gamma(t+1)\Gamma(s-t+1)}\quad (s,\, t\in\mathbb{C}).
\end{equation}

Euler found the following identity in 1775, and it has a lengthy history (see, e.g., \cite[p. 252 et seq.]{Berndt}):
\begin{equation}\label{e:3}
\sum_{k=1}^{\infty}\frac{H_k}{(k+1)^2}= \frac{1}{2}\, \sum_{k=1}^{\infty}\frac{H_k}{k^2} =  \zeta(3).
\end{equation}
The identity \eqref{e:3} is a particular instance of the following more generalized Euler sum (see, e.g., \cite{Borwein}, \cite{Choi}, \cite{FlajSalv}, \cite{Sofo-2019}):
\begin{equation} \label{e:5}
2\sum_{k=1}^{\infty}\frac{H_k}{(k+1)^m}=m\,\zeta(m+1)-\sum_{k=1}^{m-2}\,\zeta(k+1)\zeta(m-k) \quad \left(m \in \mathbb{Z}_{\geqslant 2}\right),
\end{equation}
or, equivalently,
\begin{equation} \label{e:5-a}
2\sum_{k=1}^{\infty}\frac{H_k}{k^m}=(m+2)\,\zeta(m+1)-\sum_{k=1}^{m-2}\,\zeta(k+1)\zeta(m-k) \quad \left(m \in \mathbb{Z}_{\geqslant 2}\right).
\end{equation}
During his interaction with Goldbach starting in 1742, Euler initiated a series of investigations for the \emph{linear} harmonic sums \eqref{Euler-sum} (see, e.g., \cite{Choi, FlajSalv}):
   \begin{equation}\label{Euler-sum}
     \mathtt{S}(p,q):=   \sum_{n = 1}^{\infty}\,\frac{H_n^{(p)}}{n^q} \quad \left(p \in \mathbb{N},\, q \in \mathbb{Z}_{\geqslant 2}\right).
   \end{equation}
Euler's study, which Nielsen completed in 1906 (see \cite{Niel-65}), revealed that the linear harmonic sums in \eqref{Euler-sum} may be determined in the following situations:
 $p=1$; $p=q$; $p+q$ odd; $p+q$ even, but  with only the pair $(p,q)$ being the set $\{(2,4),\, (4.2)\}$.
Of these particular instances, in the ones with $p \ne q$, if $ \mathtt{S}(p,q)$ is determined,
then $\mathtt{S}(q,p)$ may be evaluated by means of the symmetry relation
  \begin{equation}\label{Euler-sum-Symm}
     \mathtt{S}(p,q) +\mathtt{S}(q,p)= \zeta(p)\,\zeta(q) + \zeta (p+q)
   \end{equation}
and  vice versa.

The \emph{nonlinear} harmonic sums include  at least two (generalized) harmonic number products.
Put $R=\left(r_1,\,\ldots,\,r_\ell\right)$ be a partition of an integer $r$ into $\ell$ summands,
so that $r=r_1+\cdots+r_\ell$ and $r_1 \leq r_2 \leq \cdots \leq r_\ell$. The Euler sum of index $R,q$ is
defined by
 \begin{equation}\label{Non-Euler-sum}
  \mathtt{S}(R;q):= \sum_{n=1}^{\infty}\, \frac{H_n^{(r_1)}\,H_n^{(r_2)} \cdots H_n^{(r_\ell)}}{n^q},
 \end{equation}
where the quantity $q+r_1+\cdots+r_\ell$ is called the weight, the quantity $\ell$ is the degree.
In partitions, repeating summands are represented by powers for brevity, for instance,
\begin{equation*}
  \mathtt{S}(1^2,2^3,7;q)= \mathtt{S}(1,1,2,2,2,7;q)=\sum_{n=1}^{\infty}\,\frac{H_n^2\, \big\{H_n^{(2)}\big\}^3\,H_n^{(7)}}{n^q}.
\end{equation*}

In the vast mathematical literature, many researchers have undertaken research on Euler, Euler-type sums,
and other versions of these sums using a variety of techniques
(see, e.g.,
\cite{Alzer}, \cite{Basu}, \cite{1}, \cite{Borwein}, \cite{Chavan}, \cite{Choi}, \cite{FlajSalv}, \cite{Freitas}, \cite{Quan}, \cite{Sofo-2022}, \cite{Sofo-2012}, \cite{Sofo-2019}, \cite{SofoSri}, \cite{Xu}  and the references therein).

The main purpose of this paper is to explore the following variants of the Euler harmonic sums: For $n\in\mathbb{Z}_{\geqslant 0}$, $m\in\mathbb{N}$, and
   $p \in \mathbb{C}\setminus \mathbb{Z}_{\leqslant -1}$,
\begin{equation*}
\sum_{k=1}^{\infty}\frac{H_k}{(n+k+1)^m\binom{n+k}{k}}, \quad \sum_{k=1}^{\infty}\frac{H_k^2-H_k^{(2)}}{(n+k+1)^m\binom{n+k}{k}},
\end{equation*}
\begin{equation*}
\sum_{k=1}^{\infty}\frac{H_k}{k(p+n+k)^m\binom{n+k}{k}} \quad \mbox{and}\quad \sum_{k=1}^{\infty}\frac{H_k^2-H_k^{(2)}}{k(p+n+k)^m\binom{n+k}{k}}.
\end{equation*}
In addition, as corollaries, we present specific cases of our primary discoveries, some of whose further particular instances are assessed in terms of elementary and well-known functions as well as certain mathematical constants. We discuss pertinent connections between our findings and those of other known ones. Investigation of a particular instance of one result reveals a connection to series involving zeta functions, which is also an interesting and useful research topic.

\section{Preliminary results}\label{PR}

The following lemma recalls some required properties for the gamma, psi and polygamma functions for easy reference.

 \vskip 3mm
\begin{lem}\label{Lem-A} The followings hold true:
\begin{itemize}
 \item[(i)]
 \begin{equation}\label{Lem-A-eq1}
   \Gamma (z) \Gamma (1-z)= \frac{\pi}{\sin (\pi z)} \quad \left(z \in \mathbb{C} \setminus \mathbb{Z}\right).
 \end{equation}

\item[(ii)] $\Gamma (z)$ and $\psi (z)$ are meromorphic functions on the whole complex $z$-plane with simple poles
 at $z=-k$ $\left(k \in \mathbb{Z}_{\geqslant 0}\right)$ with their respective residues given by
 \begin{equation}\label{Gamma-Res}
   \operatornamewithlimits{Res}_{z=-k}\,\Gamma (z) = \lim\limits_{z\to-k}(z+k)\Gamma(z)=\frac{(-1)^k}{k!}
     \quad \left(k \in \mathbb{Z}_{\geqslant 0} \right)
 \end{equation}
and
\begin{equation}\label{psi-Res}
   \operatornamewithlimits{Res}_{z=-k}\,\psi (z) = \lim\limits_{z\to-k}(z+k)\psi(z)=-1
     \quad \left(k \in \mathbb{Z}_{\geqslant 0} \right).
 \end{equation}

\item[(iii)] The Laurent expansion for $\psi (z)$ at $z=-k$ $\left(k \in \mathbb{Z}_{\geqslant 0}\right)$
      is given by
      \begin{equation}\label{LE-psi}
        \psi (z)= - \frac{1}{z+k}+\psi (k+1) + \sum_{n=2}^{\infty}\, \alpha_n\, (z+k)^{n-1},
      \end{equation}
   where
     \begin{equation}\label{LE-psi-a}
      \alpha_n = (-1)^n\, \zeta (n) + H_k^{(n)}.
     \end{equation}

\item[(iv)] The Laurent expansion for the polygamma function
$\psi^{(\ell)} (z)$ at $z=-k$ $\left(k \in \mathbb{Z}_{\geqslant 0}\right)$
      is given by
\begin{equation}\label{LE-polygamm}
 \psi^{(\ell)} (z) = \frac{(-1)^{\ell +1}\,\ell !}{(z+k)^{\ell +1}}
       + \sum_{n=\ell}^{\infty}\,\{n\}_{\ell}\,\alpha_{n+1}\,(z+k)^{n-\ell} \quad (\ell \in \mathbb{N}),
\end{equation}
where $\{\lambda \}_{\ell}$ $(\lambda \in \mathbb{C})$ is the falling factorial defined by
\begin{equation*}
  \{\lambda \}_{\ell}:=\left\{
                         \begin{array}{ll}
                           1 & (\ell=0) \\
                           \lambda (\lambda-1) \cdots (\lambda-\ell +1) & (\ell \in \mathbb{N}),
                         \end{array}
                       \right.
\end{equation*}
and $\alpha_n$ are given as in \eqref{LE-psi-a}.

\item[(v)]
\begin{equation}\label{psi-ext}
  \psi (z+m)= \psi (z) + \sum_{j=0}^{m-1}\, \frac{1}{z+j} \quad \left(m \in \mathbb{Z}_{\geqslant 0}\right)
\end{equation}
and
\begin{equation}\label{polygam-ext}
  \psi^{(n)} (z+m)= \psi^{(n)} (z) +(-1)^n\,n!\, \sum_{j=0}^{m-1}\, \frac{1}{(z+j)^{n+1}} \quad \left(m,\, n \in \mathbb{Z}_{\geqslant 0}\right).
\end{equation}

\end{itemize}
\end{lem}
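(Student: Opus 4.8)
Every assertion in Lemma~\ref{Lem-A} reduces to the two elementary facts $\Gamma(z+1)=z\,\Gamma(z)$ and its logarithmic-derivative consequence $\psi(z+1)=\psi(z)+1/z$; differentiating the latter $n$ times recovers \eqref{Polyg-rr}. The plan is to prove the parts in the order (v), (i), (ii), (iii), (iv), treating (v) as the engine that drives the rest.

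\textbf{Parts (v) and (i).} For \eqref{psi-ext} I would induct on $m$: the case $m=1$ is the recurrence just recalled, and the inductive step writes $\psi(z+m+1)=\psi(z+m)+\frac{1}{z+m}$ and invokes the hypothesis. Identity \eqref{polygam-ext} then follows either by the same induction applied to \eqref{Polyg-rr}, or by differentiating \eqref{psi-ext} $n$ times, which is legitimate term by term since the finite sum and $\psi$ are holomorphic off the poles. Part (i) is the classical reflection formula; I would obtain it by writing $\Gamma(z)\Gamma(1-z)=B(z,1-z)$ via \eqref{beta}, substituting $t=u/(1+u)$ to reach $\int_0^\infty u^{z-1}/(1+u)\,du$, evaluating this integral as $\pi/\sin(\pi z)$ for $0<\Re(z)<1$ by a keyhole-contour argument, and then extending to $\mathbb{C}\setminus\mathbb{Z}$ by analytic continuation; alternatively one simply cites \cite{Sr-Ch-12}.

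\textbf{Parts (ii) and (iii).} Iterating $\Gamma(z+1)=z\Gamma(z)$ gives $\Gamma(z)=\Gamma(z+k+1)/\prod_{j=0}^{k}(z+j)$, which simultaneously furnishes the meromorphic continuation of $\Gamma$ to $\mathbb{C}$ and shows its poles are exactly the points $z=-k$ ($k\in\mathbb{Z}_{\geqslant 0}$); evaluating $\lim_{z\to-k}(z+k)\Gamma(z)$ with $\Gamma(1)=1$ gives the residue $(-1)^k/k!$. For $\psi$, the case $m=k+1$ of (v) reads $\psi(z)=\psi(z+k+1)-\sum_{j=0}^{k}\frac{1}{z+j}$; here $\psi(z+k+1)$ is holomorphic near $z=-k$ and only the $j=k$ term is singular, so $\psi$ has a simple pole at $z=-k$ with residue $-1$ (equivalently, $\psi=(\log\Gamma)'$ inherits the residue $-1$ from the simple pole of $\Gamma$), and letting $k$ grow shows $\psi$ is meromorphic on all of $\mathbb{C}$ with poles only at the non-positive integers. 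For (iii) I would push the same identity further: separate $-\frac{1}{z+k}$, Taylor-expand the analytic part $\psi(z+k+1)$ about $z=-k$ using $\psi(1)=-\gamma$ and $\psi^{(n)}(1)=(-1)^{n+1}n!\,\zeta(n+1)$ (read from \eqref{polygamma}) to get $-\gamma+\sum_{n\geqslant 2}(-1)^n\zeta(n)(z+k)^{n-1}$, and expand each remaining term as the geometric series $\frac{1}{z+j}=-\sum_{N\geqslant 0}(z+k)^N/(k-j)^{N+1}$, valid for $|z+k|<1$, so that $-\sum_{j=0}^{k-1}\frac{1}{z+j}=\sum_{N\geqslant 0}H_k^{(N+1)}(z+k)^N$. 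Collecting the coefficient of $(z+k)^{n-1}$ yields $\alpha_n=(-1)^n\zeta(n)+H_k^{(n)}$, while the constant term $H_k-\gamma$ becomes $\psi(k+1)$ by \eqref{e:1}; this is precisely \eqref{LE-psi}--\eqref{LE-psi-a}.

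\textbf{Part (iv) and the main obstacle.} Differentiating the Laurent series \eqref{LE-psi} $\ell$ times term by term — permissible because it converges locally uniformly on the punctured disk $0<|z+k|<1$ — sends the principal part $-\frac{1}{z+k}$ to $(-1)^{\ell+1}\ell!/(z+k)^{\ell+1}$, kills the constant $\psi(k+1)$, and sends the regular part to $\sum_{n\geqslant 2}\alpha_n\,\{n-1\}_\ell\,(z+k)^{n-1-\ell}$; reindexing and using $\{n\}_\ell=0$ for $1\leqslant n\leqslant\ell-1$ recasts this in the form displayed in \eqref{LE-polygamm}. I expect the one genuinely delicate point to be the index bookkeeping in part (iii): arranging the shifts so that the coefficient of $(z+k)^{n-1}$ comes out to be exactly $(-1)^n\zeta(n)+H_k^{(n)}$ and identifying the constant term with $\psi(k+1)$ through \eqref{e:1}. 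The remaining ingredients are either classical (the reflection formula) or routine (the inductions and the term-by-term differentiation).
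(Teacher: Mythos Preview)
Your proposal is correct and, for the only part the paper actually argues (deriving \eqref{LE-polygamm} by differentiating \eqref{LE-psi} $\ell$ times), it matches the paper's approach exactly. For parts (i)--(iii) and (v) the paper simply cites \cite{Sr-Ch-12} and \cite{Magn}, whereas you supply self-contained arguments; your derivations are sound and your index bookkeeping in part (iii) checks out.
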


\begin{proof}
  One may refer to \cite[pp. 4 and 24]{Sr-Ch-12} and \cite[Section 1.2]{Magn}.
Equation \eqref{LE-polygamm} can be derived by differentiating both sides of \eqref{LE-psi}
$\ell$-times.
\end{proof}

\vskip 3mm
\begin{lem}\label{LemB}
 Let $k \in \mathbb{Z}_{\geqslant 0}$. Then
\begin{equation}\label{LemB-eq1}
\lim_{z \rightarrow -k}\frac{\psi(z)}{\Gamma(z)}=(-1)^{k-1}\,k!;
\end{equation}

\begin{equation}\label{LemB-eq2}
\aligned
\lim_{z \rightarrow {-k}}\frac{\psi^2(z)-\psi'(z)}{\Gamma(z)}
  &= -\lim_{z \rightarrow {-k}}\, \frac{d}{dz} \left\{\frac{\psi(z)}{\Gamma(z)}\right\}\\
  &=2\,(-1)^{k-1}\,k!\,\psi(k+1);
\endaligned
\end{equation}

\begin{equation}\label{LemB-eq3}
\aligned
&\lim_{z \rightarrow {-k}}\frac{\psi^3(z)-3\,\psi(z)\,\psi'(z)+ \psi^{(2)}(z)}{\Gamma(z)}
 = \lim_{z \rightarrow {-k}}\,\frac{d^2}{dz^2} \left\{\frac{\psi(z)}{\Gamma(z)}\right\} \\
&\hskip 15mm =3\,(-1)^{k}\,k!\,\left\{\zeta (2) + H_k^{(2)}-\psi^2(k+1)\right\}.
\endaligned
\end{equation}

\end{lem}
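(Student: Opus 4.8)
The plan is to reduce all three limits to the first few Taylor coefficients of the entire function $1/\Gamma$ at $z=-k$, and then to read those coefficients off from the reflection formula \eqref{Lem-A-eq1}. From \eqref{Psi-F} we have $\Gamma'(z)=\psi(z)\Gamma(z)$, and successive differentiation of $1/\Gamma$ gives, off the poles of $\psi$,
\begin{gather*}
\frac{d}{dz}\,\frac{1}{\Gamma(z)}=-\frac{\psi(z)}{\Gamma(z)},\qquad
\frac{d^2}{dz^2}\,\frac{1}{\Gamma(z)}=\frac{\psi^2(z)-\psi'(z)}{\Gamma(z)},\\
\frac{d^3}{dz^3}\,\frac{1}{\Gamma(z)}=-\,\frac{\psi^3(z)-3\psi(z)\psi'(z)+\psi^{(2)}(z)}{\Gamma(z)}.
\end{gather*}
Since $\psi/\Gamma=-\frac{d}{dz}(1/\Gamma)$, it follows that $-\frac{d}{dz}\{\psi/\Gamma\}=\frac{d^2}{dz^2}(1/\Gamma)=(\psi^2-\psi')/\Gamma$ and $\frac{d^2}{dz^2}\{\psi/\Gamma\}=-\frac{d^3}{dz^3}(1/\Gamma)=(\psi^3-3\psi\psi'+\psi^{(2)})/\Gamma$, which are exactly the inner equalities in \eqref{LemB-eq2} and \eqref{LemB-eq3}; in particular $\psi/\Gamma$, $(\psi^2-\psi')/\Gamma$ and $(\psi^3-3\psi\psi'+\psi^{(2)})/\Gamma$ all extend analytically across $z=-k$, so the three limits exist. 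Writing $1/\Gamma(z)=\sum_{j\ge 1}c_j(z+k)^j$ near $z=-k$ (the series starts at $j=1$ because $1/\Gamma$ has a simple zero there, by Lemma~\ref{Lem-A}(ii)) and evaluating the identities at $z=-k$, the limits in \eqref{LemB-eq1}, \eqref{LemB-eq2}, \eqref{LemB-eq3} equal $-c_1$, $2c_2$, $-6c_3$ respectively.

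It remains to compute $c_1,c_2,c_3$. Using \eqref{Lem-A-eq1} I would write $1/\Gamma(z)=\pi^{-1}\sin(\pi z)\,\Gamma(1-z)$. With $t:=z+k$ one has $\sin(\pi z)=(-1)^k\sin(\pi t)$, hence $\pi^{-1}\sin(\pi z)=(-1)^k\,t\bigl(1-\tfrac{\pi^2}{6}t^2+\cdots\bigr)$, while $F(z):=\Gamma(1-z)$ is analytic at $z=-k$ with, again by $\Gamma'=\psi\Gamma$, $F(-k)=k!$, $F'(-k)=-k!\,\psi(k+1)$ and $F''(-k)=k!\bigl(\psi'(k+1)+\psi^2(k+1)\bigr)$. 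Multiplying the two expansions and collecting powers of $t$ yields $c_1=(-1)^k k!$, $c_2=-(-1)^k k!\,\psi(k+1)$ and $c_3=(-1)^k k!\bigl(\tfrac12\psi'(k+1)+\tfrac12\psi^2(k+1)-\tfrac{\pi^2}{6}\bigr)$. Since $\pi^2/6=\zeta(2)$ and, by \eqref{polygamma}, \eqref{GZF} and \eqref{GHN}, $\psi'(k+1)=\zeta(2,k+1)=\zeta(2)-H_k^{(2)}$, the last coefficient simplifies to $c_3=\tfrac12(-1)^k k!\bigl(\psi^2(k+1)-\zeta(2)-H_k^{(2)}\bigr)$. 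Substituting $c_1,c_2,c_3$ into $-c_1$, $2c_2$, $-6c_3$ reproduces precisely the right-hand sides $(-1)^{k-1}k!$, $2(-1)^{k-1}k!\,\psi(k+1)$ and $3(-1)^k k!\{\zeta(2)+H_k^{(2)}-\psi^2(k+1)\}$ of \eqref{LemB-eq1}--\eqref{LemB-eq3}.

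The only step requiring care is the power-series bookkeeping in the second paragraph: $\sin(\pi t)/\pi$ must be kept through order $t^3$ and $\Gamma(1-z)$ through order $t^2$, and it is the cubic coefficient $-\pi^2/6=-\zeta(2)$ of $\sin(\pi t)/(\pi t)$ that, combined with $\psi'(k+1)=\zeta(2)-H_k^{(2)}$, collapses $c_3$ to the tidy form above. A route that avoids the reflection formula is to expand $\psi$ directly at $z=-k$ by \eqref{LE-psi}--\eqref{LE-psi-a}: writing $\psi(z)=-(z+k)^{-1}+\bigl(\log\phi\bigr)'(z)$ with $\phi(z):=(z+k)\Gamma(z)$, integrate and exponentiate to obtain the Taylor coefficients of $1/\phi$ at $z=-k$, hence those of $1/\Gamma(z)=(z+k)/\phi(z)$, and finish in the same way.
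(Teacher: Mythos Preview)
Your proof is correct and takes a genuinely different route from the paper's. The paper proves \eqref{LemB-eq1} by quoting the residues \eqref{Gamma-Res}--\eqref{psi-Res}, and for \eqref{LemB-eq2}--\eqref{LemB-eq3} it plugs the Laurent expansions \eqref{LE-psi}, \eqref{LE-polygamm} of $\psi$ and $\psi^{(\ell)}$ into the numerators to extract their principal parts at $z=-k$, then divides by the residue of $\Gamma$ via the reflection formula; the intermediate equalities with $\frac{d}{dz}\{\psi/\Gamma\}$ and $\frac{d^2}{dz^2}\{\psi/\Gamma\}$ are stated but not argued. You instead recognise once and for all that the three ratios are $-(1/\Gamma)'$, $(1/\Gamma)''$, $-(1/\Gamma)'''$, so all three limits are just (signed) Taylor coefficients of the entire function $1/\Gamma$ at $-k$, and you compute those coefficients by multiplying the expansions of $\pi^{-1}\sin(\pi z)$ and $\Gamma(1-z)$.

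Your approach buys a uniform explanation of why the limits exist, an immediate proof of the intermediate equalities in \eqref{LemB-eq2}--\eqref{LemB-eq3} as pointwise identities, and an obvious generalisation to higher derivatives (the $n$-th such limit is $(-1)^n n!\,c_n$). The paper's approach has the virtue of working entirely inside the Laurent data already recorded in Lemma~\ref{Lem-A}, without any power-series multiplication, at the cost of handling each case separately.
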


\begin{proof}
 One finds from \eqref{Gamma-Res} and \eqref{psi-Res} that
\begin{equation*}
 \lim_{z \rightarrow -k}\frac{\psi(z)}{\Gamma(z)}= \lim_{z \rightarrow -k}\frac{(z+k)\, \psi(z)}{(z+k)\,\Gamma(z)}
   =(-1)^{k-1}\,k!,
\end{equation*}
which proves \eqref{LemB-eq1}.

\vskip 3mm
One may use \eqref{LE-psi} and \eqref{LE-polygamm} to obtain
  \begin{equation}\label{LemB-eq2-pf1}
\aligned
   \psi^2(z)-\psi'(z)& = - \frac{2\,\psi(k+1) }{z+k} +O(1)+O(z+k) \quad (z \rightarrow -k)\\
            &=- \frac{2\,\psi(k+1) }{z+k} +O(1)+o(1)\quad (z \rightarrow -k)\\
             &=- \frac{2\,\psi(k+1) }{z+k} +O(1)\quad (z \rightarrow -k).
\endaligned
  \end{equation}
Employing \eqref{Lem-A-eq1} and \eqref{LemB-eq2-pf1}, one can get
\begin{equation*}
\aligned
  &\lim_{z \rightarrow {-k}}\frac{\psi^2(z)-\psi'(z)}{\Gamma(z)}
    = \lim_{z \rightarrow {-k}} \,\frac{\sin (\pi z)\, \Gamma (1-z)}{\pi}\,\left\{\psi^2(z)-\psi'(z)\right\}\\
    &\hskip 3mm = \frac{\Gamma (1+k)}{\pi}\, \lim_{z \rightarrow {-k}}\,\frac{\sin (\pi z)}{z+k}\, (z+k) \left\{\psi^2(z)-\psi'(z)\right\}\\
     &\hskip 3mm = \frac{k!}{\pi}\,\pi\, \cos (\pi k)\, \left\{-2\,\psi(k+1)\right\},
\endaligned
\end{equation*}
which justifies \eqref{LemB-eq2}.

\vskip 3mm
\vskip 3mm
One may use \eqref{LE-psi} and \eqref{LE-polygamm} to derive
\begin{equation*}
  \psi^3(z)-3\,\psi(z)\,\psi'(z)+ \psi^{(2)}(z)
    = \frac{3 \left\{\alpha_2- \psi^2(k+1)\right\}}{z+k}+ O(1) \quad (z \rightarrow -k).
\end{equation*}
Now, a similar technique as in getting  \eqref{LemB-eq2} may verify \eqref{LemB-eq3}.
\end{proof}

\vskip 3mm
The next two theorems  are recalled (see \cite{1}).

\vskip 3mm

\begin{thm}\label{Cor2.1}
Let $x \in \mathbb{R} \setminus \mathbb{Z}_{\leqslant -1}$ and $m\in \mathbb{N}$. Then
\begin{equation}\label{e:15}
\sum_{k=1}^{\infty }\frac{(-1)^{k-1}}{k^{m}}\binom{x}{k}=\frac{%
(-1)^{m}}{m!}\frac{\partial^m}{\partial z^m}\frac{\Gamma (x+1)\Gamma (z)}{\Gamma (z+x)}\bigg|_{z=1}.
\end{equation}
\end{thm}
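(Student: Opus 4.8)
The plan is to recognize the left-hand side of \eqref{e:15} as coming from the Beta integral and then differentiate under the integral sign. First I would recall from \eqref{beta} that for $\Re(z)>0$,
\begin{equation*}
\frac{\Gamma(x+1)\Gamma(z)}{\Gamma(z+x)} = \frac{\Gamma(x+1)\Gamma(z)}{\Gamma(x+z)} = x\,B(x,z) = x\int_0^1 t^{x-1}(1-t)^{z-1}\,dt
\end{equation*}
(using $\Gamma(x+1)=x\Gamma(x)$), although it will actually be cleaner to work directly with $\Gamma(x+1)\Gamma(z)/\Gamma(z+x+1) = B(x+1,z)/\,$(a shift) or simply to expand $1/\binom{x}{k}$ as a Beta-type integral. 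The key identity I would establish is the integral representation
\begin{equation*}
\frac{1}{k^m\binom{x}{k}} = \frac{(-1)^{m-1}}{(m-1)!}\int_0^1 t^{k-1}(\log t)^{m-1}\cdot\bigl(\text{something in }x\text{ and }t\bigr)\,dt,
\end{equation*}
or, more robustly, to start from the generating identity $\sum_{k\ge 1}(-1)^{k-1}\binom{x}{k}y^k = 1-(1-y)^x$ and integrate.

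The cleanest route: set $f(z) := \dfrac{\Gamma(x+1)\Gamma(z)}{\Gamma(z+x)}$ and observe via \eqref{beta} that $f(z) = x\,B(z,x) = x\int_0^1 u^{z-1}(1-u)^{x-1}\,du$ for $\Re(z)>0$, $x>-1$ (for $x\le 0$ one extends by analytic continuation, since both sides are meromorphic in $x$ with matching poles). Then
\begin{equation*}
\frac{\partial^m}{\partial z^m} f(z) = x\int_0^1 u^{z-1}(\log u)^m (1-u)^{x-1}\,du,
\end{equation*}
so that
\begin{equation*}
\frac{(-1)^m}{m!}\,\frac{\partial^m}{\partial z^m}f(z)\bigg|_{z=1} = \frac{(-1)^m}{m!}\,x\int_0^1 (\log u)^m (1-u)^{x-1}\,du.
\end{equation*}
On the other hand, I would expand $(1-u)^{x-1}$, or rather integrate the binomial series termwise: from $1-(1-u)^{x} = \sum_{k=1}^\infty (-1)^{k-1}\binom{x}{k}u^k$ one divides by $u$ and integrates $m$ times against suitable kernels, or equivalently uses $\int_0^1 u^{k-1}(\log u)^m\,du = (-1)^m m!/k^{m+1}$ together with $\binom{x}{k}/k = \binom{x}{k}\cdot\frac1k$ and the relation $x\binom{x-1}{k-1}=k\binom{x}{k}$ to match powers. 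Carrying this out, the term-by-term integration of the series $\sum_{k\ge1}(-1)^{k-1}\binom{x}{k}u^{k-1}$ against $x(\log u)^m$ reproduces exactly $\sum_{k\ge1}\frac{(-1)^{k-1}}{k^m}\binom{x}{k}$, giving \eqref{e:15}.

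The main obstacle I anticipate is justifying the interchange of summation and integration (and of differentiation and integration), together with handling the range $x\in\mathbb{R}\setminus\mathbb{Z}_{\le -1}$ where $x$ may be negative — there the integral $\int_0^1(1-u)^{x-1}\,du$ converges only for $x>0$, so for $-1<x\le 0$ one must argue by analytic continuation in $x$, checking that both sides of \eqref{e:15} are analytic in $x$ on this domain (the right side manifestly so, by \eqref{GBC} and the fact that the poles of $\Gamma(z+x)$ at $z=1$ occur only when $x\in\mathbb{Z}_{\le -1}$; the left side because the series converges locally uniformly, which itself needs the asymptotics $\binom{x}{k}\sim (-1)^k\,k^{-x-1}/\Gamma(-x)$ as $k\to\infty$ to see absolute convergence for $m$ large enough and then invoke the identity theorem). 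A referee-proof write-up would first prove the identity for $x>0$ by the Beta-integral computation above, then extend to all $x\in\mathbb{R}\setminus\mathbb{Z}_{\le -1}$ by analytic continuation, noting that for $m\ge 2$ the left-hand series converges for all such $x$ and for $m=1$ one restricts to $x>0$ or interprets the series appropriately. Everything else is a routine manipulation of the Beta function and of $\int_0^1 u^{k-1}(\log u)^m\,du$.
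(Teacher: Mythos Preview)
The paper does not actually prove this theorem: it is \emph{recalled} from the external reference \cite{1} (Bat{\i}r--Sofo), so there is no ``paper's own proof'' to compare against. Your Beta-integral argument is correct in outline and is, in fact, exactly the method the paper itself uses later to prove the closely related Theorem~\ref{Theorem2.5}: there the authors write $1/(p+k)^{m+1}$ as $\frac{1}{m!}\int_0^\infty u^m e^{-(p+k)u}\,du$, sum the resulting geometric/binomial series to $(1-e^{-u})^x$, substitute $y=1-e^{-u}$ to land on the Beta integral, and differentiate. Your proposal is essentially the same computation specialized (after a change of variable) to $p\to 1$, with the roles of the summation index shifted; so methodologically you are in complete agreement with the paper's techniques.

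One small slip to fix when you write it up: the series you name at the end, $\sum_{k\ge1}(-1)^{k-1}\binom{x}{k}u^{k-1}$, is the expansion of $[1-(1-u)^x]/u$, not of $x(1-u)^{x-1}$. Integrating \emph{that} against $(\log u)^m$ produces $\sum_k(-1)^{k-1}\binom{x}{k}/k^{m+1}$, off by one power of $k$. What you actually want is the expansion
\[
x(1-u)^{x-1}=\sum_{k\ge1}(-1)^{k-1}\,k\binom{x}{k}\,u^{k-1},
\]
obtained either by differentiating the binomial series or by the very relation $x\binom{x-1}{k-1}=k\binom{x}{k}$ you already flagged; the extra factor of $k$ then cancels correctly against $\int_0^1 u^{k-1}(\log u)^m\,du=(-1)^m m!/k^{m+1}$. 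Your remarks about needing analytic continuation in $x$ beyond the range $x>0$ (where the Beta integral converges) are on target, and indeed the paper's proof of Theorem~\ref{Theorem2.5} handles this the same way, first establishing the identity on a small domain and then continuing.
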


\begin{thm}\label{Thm25} Let $m,\, n\in\mathbb{N}$. Then
\begin{equation}\label{Thm25-eq}
\aligned
\sum_{k=0}^{\infty}\frac{(-1)^{n-1}}{(n+k+1)^{m+1}\binom{n+k}{k}}=&\sum_{k=1}^{n}\frac{(-1)^{k-1}}{k^m}\binom{n}{k}(H_n-H_{n-k})  \\
&-\frac{(-1)^m}{m!}\frac{\partial}{\partial x}\bigg\{\frac{\partial^{m}}{\partial{z}
^{m}}\frac{\Gamma(x+1)\Gamma (z)}{\Gamma (z+x)}\bigg\}\bigg|_{\begin{subarray}{l}
       x=n\\
       z=1
      \end{subarray}}.
\endaligned
\end{equation}
\end{thm}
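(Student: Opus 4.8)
The plan is to obtain \eqref{Thm25-eq} by differentiating the identity \eqref{e:15} of Theorem \ref{Cor2.1} with respect to the parameter $x$ and then setting $x=n$. Write $G(x):=\sum_{k=1}^{\infty}\frac{(-1)^{k-1}}{k^{m}}\binom{x}{k}$, so that \eqref{e:15} reads $G(x)=\frac{(-1)^{m}}{m!}\frac{\partial^{m}}{\partial z^{m}}\frac{\Gamma(x+1)\Gamma(z)}{\Gamma(z+x)}\big|_{z=1}$ for $x\in\mathbb{R}\setminus\mathbb{Z}_{\leqslant -1}$. Differentiating the right-hand side in $x$ and evaluating at $x=n$ produces exactly the operator $\frac{(-1)^{m}}{m!}\frac{\partial}{\partial x}\{\frac{\partial^{m}}{\partial z^{m}}\cdots\}|_{x=n,z=1}$ appearing in \eqref{Thm25-eq}; the whole task is therefore to show that $G'(n)$ equals the finite sum in \eqref{Thm25-eq} minus the target series on the left of \eqref{Thm25-eq}.

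The key computation is the evaluation of $\frac{\partial}{\partial x}\binom{x}{k}$ at $x=n$, for which I would split the series at $k=n$. Writing $\binom{x}{k}=\frac{1}{k!}\prod_{j=0}^{k-1}(x-j)$ and using \eqref{psi-ext} together with \eqref{e:1}, one gets for $1\leqslant k\leqslant n$ that $\frac{\partial}{\partial x}\binom{x}{k}\big|_{x=n}=\binom{n}{k}\big(\psi(n+1)-\psi(n-k+1)\big)=\binom{n}{k}(H_n-H_{n-k})$, which reproduces the finite sum in \eqref{Thm25-eq}. For $k\geqslant n+1$ the factor $(x-n)$ makes $\binom{x}{k}$ vanish at $x=n$, so only the term of the logarithmic derivative that omits $(x-n)$ survives, giving $\frac{\partial}{\partial x}\binom{x}{k}\big|_{x=n}=\frac{n!\,(-1)^{k-1-n}(k-1-n)!}{k!}$. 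Thus the tail $\sum_{k=n+1}^{\infty}\frac{(-1)^{k-1}}{k^{m}}\frac{\partial}{\partial x}\binom{x}{k}\big|_{x=n}$ collapses, after the sign simplification $(-1)^{k-1}(-1)^{k-1-n}=(-1)^{n}$ and the reindexing $k=n+1+j$, to $(-1)^{n}\sum_{j=0}^{\infty}\frac{n!\,j!}{(n+j)!\,(n+j+1)^{m+1}}=(-1)^{n}\sum_{j=0}^{\infty}\frac{1}{(n+j+1)^{m+1}\binom{n+j}{j}}$, which is precisely $-1$ times the left-hand side of \eqref{Thm25-eq}. Collecting the finite part and the tail and solving for the series then yields \eqref{Thm25-eq} after matching signs.

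The main obstacle is justifying the term-by-term differentiation of $G(x)$ at $x=n$, i.e. interchanging $\frac{d}{dx}$ with the infinite sum. Here I would argue that both sides of \eqref{e:15} are real-analytic in $x$ on $(-1,\infty)$: the right-hand side is manifestly smooth, while for the left-hand side the asymptotics $|\binom{x}{k}|=O(k^{-x-1})$ and $|\frac{\partial}{\partial x}\binom{x}{k}|=O(k^{-x-1}\log k)$ (the digamma-difference factor being $O(\log k)$) give locally uniform convergence of both the series and its formal $x$-derivative on a neighborhood of $x=n$, so differentiation under the summation sign is legitimate and $G'(n)$ is the sum of the termwise derivatives. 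A convenient alternative, avoiding the delicate behaviour of $\binom{x}{k}$ near the integer $x=n$ for large $k$, is to establish the differentiated identity for non-integer $x$ in a punctured neighborhood of $n$ and then pass to the limit $x\to n$ using continuity of both sides.
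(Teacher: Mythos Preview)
Your proposal is correct and follows essentially the same route as the paper: differentiate the identity \eqref{e:15} in $x$, set $x=n$, and split the resulting series at $k=n$; the paper in fact re-derives \eqref{Thm25-eq} exactly this way inside the proof of Theorem~\ref{Thm2.1} (see \eqref{dp(x)}--\eqref{dp(x)-b}). The only cosmetic difference is that for the tail $k\geqslant n+1$ the paper writes $\tfrac{\partial}{\partial x}\binom{x}{k}=\binom{x}{k}\{\psi(x+1)-\psi(x-k+1)\}$ and evaluates the indeterminate product $\binom{n}{k}\psi(n-k+1)$ via the limit \eqref{LemB-eq1} of Lemma~\ref{LemB}, whereas you obtain the same value more directly from the product formula $\binom{x}{k}=\frac{1}{k!}\prod_{j=0}^{k-1}(x-j)$; both computations are equivalent.
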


\section{Main results}\label{sec-MR}

This section establishes our main findings.

\vskip 3mm

\begin{thm}\label{Thm2.1} Let  $x \in \mathbb{R} \setminus \mathbb{Z}_{\leqslant -1}$,
    $z \in \mathbb{R} \setminus \mathbb{Z}_{\leqslant 0}$,
  $m\in \mathbb{N}$, and  $n \in \mathbb{Z}_{\geqslant 0}$. Then
\begin{equation}\label{Thm2.1-eq}
  \aligned
&\sum_{k=0}^{\infty}\frac{H_k}{(n+k+1)^{m+1}\binom{n+k}{k}}\\
&\hskip 3mm =\frac{(-1)^n}{2}\sum_{k=1}^{n}\frac{(-1)^{k-1}}{k^m}\binom{n}{k}\left(H_{n-k}^2+H_{n-k}^{(2)}-H_n^{(2)}-H_n^2\right)\\
&\hskip 6mm-\frac{(-1)^{m+n}}{2\,m!}\frac{\partial^2}{\partial x^2}\left\{\frac{\partial^m}{\partial z^m}\frac{\Gamma(x+1)\Gamma (z)}{\Gamma (z+x)}\right\}\bigg|_{\begin{subarray}{l}
       x=n\\
       z=1
      \end{subarray}}\\
&\hskip 6mm+\frac{(-1)^{m+n}\,H_n}{m!}
\frac{\partial}{\partial x}\left\{\frac{\partial^{m}}{\partial{z}^{m}}\frac{\Gamma(x+1)\Gamma (z)}{\Gamma (z+x)}\right\}\bigg|_{\begin{subarray}{l}
       x=n\\
       z=1
      \end{subarray}}.
   \endaligned
\end{equation}
\end{thm}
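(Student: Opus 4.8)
The plan is to obtain \eqref{Thm2.1-eq} by the same mechanism that produced Theorem~\ref{Thm25}, but applied one order higher in a differentiation parameter. The starting point is Theorem~\ref{Cor2.1}, which expresses the alternating binomial sum $\sum_{k\ge1}(-1)^{k-1}k^{-m}\binom{x}{k}$ as a $z$-derivative of $G(x,z):=\dfrac{\Gamma(x+1)\Gamma(z)}{\Gamma(z+x)}$ evaluated at $z=1$. The key observation is that a harmonic number $H_k$ can be manufactured by differentiating $\binom{x}{k}$ in the variable $x$: since $\binom{x}{k}=\dfrac{\Gamma(x+1)}{\Gamma(k+1)\Gamma(x-k+1)}$, one has $\dfrac{\partial}{\partial x}\binom{x}{k}=\binom{x}{k}\bigl(\psi(x+1)-\psi(x-k+1)\bigr)$, and a second $x$-differentiation brings down, besides the square of that bracket, a term $\psi'(x+1)-\psi'(x-k+1)$. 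Specializing $x=n\in\mathbb{Z}_{\ge0}$ and using \eqref{psi-ext}, \eqref{polygam-ext} together with \eqref{e:1}, \eqref{e:2}, the bracket $\psi(n+1)-\psi(n-k+1)$ collapses to $H_n-H_{n-k}$ for $1\le k\le n$ (and the term vanishes structurally for $k>n$ because $\binom{n}{k}=0$ there, which is exactly why the finite sum on the right-hand side runs only to $n$). Thus applying $\tfrac12\partial_x^2$ to the identity in Theorem~\ref{Cor2.1}, evaluating at $x=n$, and collecting the pieces, the left side becomes $\sum_{k\ge1}\dfrac{\tfrac12\bigl[(H_n-H_{n-k})^2+(H_n^{(2)}-H_{n-k}^{(2)})\bigr]}{k^m}\binom{n}{k}$ while the right side is $\tfrac{(-1)^m}{2\,m!}\partial_x^2\partial_z^m G(x,z)\big|_{x=n,z=1}$.

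First I would set up this "$\tfrac12\partial_x^2$ of Theorem~\ref{Cor2.1}" identity cleanly, tracking the three contributions on the combinatorial side: the $(H_n-H_{n-k})^2$ term, the $(H_n^{(2)}-H_{n-k}^{(2)})$ term coming from the second-derivative-of-$\psi$ piece, and confirming that no $k>n$ contributions survive. Expanding $(H_n-H_{n-k})^2=H_n^2-2H_nH_{n-k}+H_{n-k}^2$ and regrouping explains the shape of the right-hand side of \eqref{Thm2.1-eq}: the $H_{n-k}^2+H_{n-k}^{(2)}$ and the $-H_n^2-H_n^{(2)}$ land in the first sum, while the cross term $-2H_nH_{n-k}$ is handled by writing $H_{n-k}=H_n-(H_n-H_{n-k})$ and recognizing $\sum_k k^{-m}\binom{n}{k}(H_n-H_{n-k})$ as precisely the quantity appearing in Theorem~\ref{Thm25}, namely $(-1)^{n-1}\sum_{k\ge0}(n+k+1)^{-(m+1)}\binom{n+k}{k}^{-1}$ plus a $G$-derivative term. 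This is the bookkeeping heart of the argument.

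Next I would connect the binomial side to the target series. The identity
\[
\frac{1}{(n+k+1)\binom{n+k}{k}}=\int_0^1 t^{n+k}(1-t)^{\,0}\,\frac{\Gamma(n+k+2)}{\Gamma(n+1)\,k!}\cdots
\]
— more efficiently, the Beta-function representation $\dfrac{1}{(n+k+1)\binom{n+k}{k}}=B(n+1,k+1)\cdot$(a constant in $k$ adjusted appropriately), or directly the known relation underlying Theorem~\ref{Thm25} — shows $\dfrac{1}{(n+k+1)^{m+1}\binom{n+k}{k}}$ is (up to sign $(-1)^n$) the $k$-th term of an expansion whose generating identity is Theorem~\ref{Thm25}. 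Concretely, Theorem~\ref{Thm25} already packages $\sum_{k\ge0}\dfrac{(-1)^{n-1}}{(n+k+1)^{m+1}\binom{n+k}{k}}$ in terms of $\sum_{k=1}^n\dfrac{(-1)^{k-1}}{k^m}\binom{n}{k}(H_n-H_{n-k})$ and a first $x$-derivative of $\partial_z^m G$; I will use Theorem~\ref{Thm25} as a black box to eliminate the cross-term sum, and feed in the new $\tfrac12\partial_x^2$ identity to produce the $H_{n-k}^2+H_{n-k}^{(2)}$ combination. Assembling: the $\sum H_k/[(n+k+1)^{m+1}\binom{n+k}{k}]$ on the left of \eqref{Thm2.1-eq} arises because $H_k$ itself factors through the same Beta-integral manipulation that turned $\binom{x}{k}$-sums into $\binom{n+k}{k}^{-1}$-sums in \cite{1}; differentiating that manipulation in $x$ and using $\partial_x$ of the kernel to generate $H_k$ (rather than $H_n-H_{n-k}$) is what yields the $H_k$ in the numerator on the left while the $H_n$, $H_n^{(2)}$, $H_{n-k}$, $H_{n-k}^{(2)}$ data sit on the right.

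The main obstacle I anticipate is the careful separation, at $x=n$, of the "regular" part of $\partial_x^2\partial_z^m G$ from the part that reproduces the finite combinatorial sum: the function $G(x,z)$ and its $z$-derivatives have, as functions of $x$, contributions from the poles of $\Gamma(z+x)$ when $z=1$ is near $x=-n+\text{integer}$, and it is precisely the Laurent-expansion bookkeeping of Lemma~\ref{Lem-A}(iii)--(iv) (the constants $\alpha_n=(-1)^n\zeta(n)+H_k^{(n)}$) that converts residual polygamma values at integer points into the $H_{n-k}^{(2)}$ and $H_n^{(2)}$ appearing in \eqref{Thm2.1-eq}. Getting the signs $(-1)^n$, $(-1)^{m+n}$ and the factor $\tfrac12$ consistent through a double $x$-differentiation, an $m$-fold $z$-differentiation, and the sign flips coming from $\psi'(n-k+1)$ versus $H_{n-k}^{(2)}$ (note \eqref{e:2} contributes $(-1)^{m}$-type signs) is the delicate step; I would verify it by checking the $n=0$ case against \eqref{e:3}/\eqref{e:5} and a small case such as $m=1$, $n=1$ before asserting the general formula.
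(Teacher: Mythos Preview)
Your overall strategy---differentiate the identity of Theorem~\ref{Cor2.1} twice in $x$ and set $x=n$---is exactly what the paper does. But there is a genuine gap at the step where you assert that ``the term vanishes structurally for $k>n$ because $\binom{n}{k}=0$ there.'' This is false, and it is precisely the $k>n$ tail that produces the target series $\sum_{k\ge0}H_k\big/\big((n+k+1)^{m+1}\binom{n+k}{k}\big)$.

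The point is that after one $x$-differentiation the summand is $\binom{x}{k}\bigl(\psi(x+1)-\psi(x-k+1)\bigr)$, and for $k>n$ the factor $\binom{x}{k}$ has a \emph{simple zero} at $x=n$ while $\psi(x-k+1)$ has a \emph{simple pole} there; the product has a finite nonzero limit. After the second differentiation you meet $\binom{x}{k}\bigl[(\psi(x+1)-\psi(x-k+1))^2+\psi'(x+1)-\psi'(x-k+1)\bigr]$; here $\psi^2(x-k+1)$ and $\psi'(x-k+1)$ each carry a double pole, but in the combination $\psi^2-\psi'$ the double poles cancel (this is Lemma~\ref{LemB}, equation~\eqref{LemB-eq2-pf1}), leaving again a simple pole that is neutralized by the zero of $\binom{x}{k}$. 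The paper evaluates these limits via
\[
\lim_{z\to -j}\frac{\psi^2(z)-\psi'(z)}{\Gamma(z)}=2(-1)^{j-1}j!\,\psi(j+1),
\]
and after reindexing $k=n+1+j$ the tail becomes exactly $2(-1)^{n+1}\sum_{j\ge0}\dfrac{H_j}{(n+j+1)^{m+1}\binom{n+j}{j}}$. That is where the left-hand side of \eqref{Thm2.1-eq} comes from---not from any Beta-integral reinterpretation. Your attempt to recover the $H_k$-weighted series through ``the same Beta-integral manipulation'' is vague and, as stated, does not work; you have already discarded the only source of that series. Once you keep the $k>n$ tail and use Lemma~\ref{LemB}, the rest of your bookkeeping (forming $P''(n)-2H_nP'(n)$ to isolate the $H_{n-k}^2+H_{n-k}^{(2)}-H_n^2-H_n^{(2)}$ combination) goes through as in the paper.
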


\begin{proof}
Let $P(x)$ be the left and right members of \eqref{e:15}. Then
the use of \eqref{GBC} can write $P(x)$ as follows:
\begin{equation}\label{Thm2.1-eq-pf1}
 P(x):=\sum_{k=1}^{\infty }\frac{(-1)^{k-1}}{k^{m}\,k!}\,P_k(x) \quad {\rm and} \quad P_k(x):= \frac{\Gamma (x+1)}{\Gamma (x-k+1)}.
\end{equation}
Also
\begin{equation}\label{R-2-14}
P(x)=\frac{
(-1)^{m}}{m!}\frac{\partial^m}{\partial z^m}\frac{\Gamma (x+1)\Gamma (z)}{\Gamma (z+x)}\bigg|_{z=1}.
\end{equation}

 We first show that the series $P(x)$ in \eqref{Thm2.1-eq-pf1} can be differentiated
term-by-term for any point $x \in \mathbb{R} \setminus \mathbb{Z}_{\leqslant -1}$.
Note that
  \begin{equation*}
    P_k(x)= \frac{1}{\Gamma (x-k+1)}\cdot \Gamma (x+1),
  \end{equation*}
whose first factor is an entire function and the second factor is analytic
on $\mathbb{R} \setminus \mathbb{Z}_{\leqslant -1}$.
  Also, for each fixed $x \in  \mathbb{R} \setminus \mathbb{Z}_{\leqslant -1}$,
  \begin{equation*}
   \frac{d}{dx} P_k(x)=P'_k(x)= -\frac{ \Gamma (x+1)}{\Gamma (x-k+1)}\left\{\psi (x-k+1)-\psi (x+1)\right\}.
  \end{equation*}
By using asymptotic expansions for the ratio of gamma functions (see, e.g., \cite[p. 7]{Sr-Ch-12})
and the psi function (see, e.g., \cite[p. 36]{Sr-Ch-12}), one may obtain
\begin{equation*}
\aligned
  P'_k(x) &= O\left(x^k\right) \left\{\log (x-k+1)-\log(x+1)+ O\left(\frac{1}{x}\right)  \right\}\\
         &=O\left(x^k\right) \left\{ O\left(\frac{k}{x}\right) +O\left(\frac{1}{x}\right)\right\}\\
         &=O\left(x^{k-1}\right) \quad (|x| \rightarrow \infty).
\endaligned
\end{equation*}
 Thus, there exists $M>1$ so large that
     \begin{equation}\label{Thm2.1-eq-pf2}
       P'_k(x) = O\left(x^{k-1}\right)=O\left(x^{k}\right) \quad (|x|>M).
     \end{equation}
 Let $x_0$ be any point in $\mathbb{R} \setminus \mathbb{Z}_{\leqslant -1}$. One can choose $\delta>0$ so small that
  \begin{equation*}
   (x_0-\delta, x_0+\delta)\subseteq [x_0-\delta, x_0+\delta]\subseteq \mathbb{R} \setminus \mathbb{Z}_{\leqslant -1}.
  \end{equation*}
Now consider three cases: (i) $(x_0-\delta, x_0+\delta)\subseteq [x_0-\delta, x_0+\delta]\subseteq [-M,M]$.
  Since $P'_k(x)$ is continuous on the bounded closed interval $[x_0-\delta, x_0+\delta]$,
     $|P'_k(x)| \leqslant L_1$ for some $L_1>0$ and for all $x \in [x_0-\delta, x_0+\delta]$.
  Thus one may find
     \begin{equation*}
       \sum_{k=1}^{\infty }\left|\frac{(-1)^{k-1}}{k^{m}\,k!}\,P'_k(x)\right|\leqslant L_1 \sum_{k=1}^{\infty}\frac{1}{k!}=L_1(e-1)
     \end{equation*}
for all $x \in [x_0-\delta, x_0+\delta]$. In view of Weierstrass $M$-test, the series
$\sum\limits_{k=1}^{\infty }\frac{(-1)^{k-1}}{k^{m}\,k!}\,P'_k(x)$ converges uniformly on $(x_0-\delta, x_0+\delta)$.
Therefore the series $P(x)$ can be term-by-term differentiation at any point $x_0 \in (-M,M)$.

\vskip 3mm
(ii) $(x_0-\delta, x_0+\delta)\subseteq [x_0-\delta, x_0+\delta] \subseteq (-\infty,-M)\cup (M, \infty)$.
   Let $\eta:=\max\{|x_0-\delta|,|x_0+\delta|\}$. One may use \eqref{Thm2.1-eq-pf2} to see that
\begin{equation*}
   \sum_{k=1}^{\infty }\left|\frac{(-1)^{k-1}}{k^{m}\,k!}\,P'_k(x)\right|
    \leqslant L_2 \sum_{k=1}^{\infty}\, \frac{|x|^k}{k!} \leqslant L_2 \sum_{k=1}^{\infty}\, \frac{\eta^k}{k!}=L_2\,e^\eta
\end{equation*}
for some $L_2>0$ and for all $x \in [x_0-\delta, x_0+\delta]$. As in (ii), the series $P(x)$ can be term-by-term differentiation at any point $x_0 \in (-\infty,-M)\cup (M, \infty)$.

\vskip 3mm
(iii) $x_0=M$. Then either $(x_0-\delta, x_0+\delta) \subset [M-\delta, M+\delta]\subseteq (0,M)\cup (M,\infty)$ or
$(x_0-\delta, x_0+\delta) \subset [-M-\delta, -M+\delta]\subseteq  (-\infty,-M]\cup (-M,0)$. Proof of this case
leaves to the interested reader.

One therefore has
 \begin{equation}\label{dp(x)}
   P'(x) = \sum_{k=1}^{\infty }\frac{(-1)^{k-1}}{k^{m}\,k!}\,\frac{ \Gamma (x+1)}{\Gamma (x-k+1)}\left\{\psi (x+1)-\psi (x-k+1)\right\}.
 \end{equation}
Putting $x=n$ $(n \in \mathbb{N})$ in \eqref{dp(x)} and using \eqref{e:1}  affords
\begin{equation}\label{dp(x)-a}
\aligned
   P'(n) &= \sum_{k=1}^{\infty }\frac{(-1)^{k-1}}{k^{m}\,k!}\,\frac{ \Gamma (n+1)}{\Gamma (n-k+1)}\left\{\psi (n+1)-\psi (n-k+1)\right\}\\
        &=\sum_{k=1}^{n}\frac{(-1)^{k-1}}{k^{m}}\,\binom{n}{k}\,\left(H_n-\gamma\right)
       - \sum_{k=1}^{\infty }\frac{(-1)^{k-1}}{k^{m}}\,\binom{n}{k}\,\psi (n-k+1).
\endaligned
 \end{equation}
The following identity is known (see \cite[Example 3.7]{1}):
\begin{equation}\label{Thm2-1-pf3-cf}
\aligned
  \sum_{k=1}^{\infty}\frac{(-1)^{k-1}}{k^m}\binom{n}{k}\psi(n-k+1)
  &= \sum_{k=1}^{n}\frac{(-1)^{k-1}}{k^m}\binom{n}{k}\left(H_{n-k}-\gamma\right) \\
&  +(-1)^{n+1}\,\sum_{k=0}^{\infty}\frac{1}{(k+n+1)^{m+1}\,\binom{n+k}{k}}.
\endaligned
\end{equation}
Employing \eqref{Thm2-1-pf3-cf} in \eqref{dp(x)-a} yields
\begin{equation}\label{dp(x)-b}
\aligned
   P'(n) =& \sum_{k=1}^{n}\frac{(-1)^{k-1}}{k^{m}}\,\binom{n}{k}\,\left(H_n-H_{n-k}\right)\\
       & +(-1)^{n}\,\sum_{k=0}^{\infty}\frac{1}{(k+n+1)^{m+1}\,\binom{n+k}{k}},
\endaligned
 \end{equation}
which may be found to be equivalent to \eqref{Thm25-eq}.

\vskip 3mm
Term-by-term differentiation of $P'(x)$ in \eqref{dp(x)}, which can be confirmed using the preceding procedure, produces
\begin{equation*}
 \aligned
 P''(x) &= \sum_{k=1}^{\infty }\frac{(-1)^{k-1}}{k^{m}}\,\binom{x}{k}\,
        \big[\left\{\psi (x+1)-\psi (x-k+1)\right\}^2 \\
        &\hskip 35mm + \left\{\psi' (x+1)-\psi'(x-k+1)\right\}\big]\\
 &=\big\{\psi^2(x+1)+\psi^\prime(x+1)\big\}\sum_{k=1}^{\infty}\frac{(-1)^{k-1}}{k^m}\binom{x}{k}\\
&\hskip 3mm -2\psi(x+1)\sum_{k=1}^{\infty}\frac{(-1)^{k-1}}{k^m}\binom{x}{k}\psi(x-k+1)\\
&\hskip 3mm+\sum_{k=1}^{\infty}\frac{(-1)^{k-1}}{k^m}\binom{x}{k}\big\{\psi^2(x-k+1)-\psi^\prime(x-k+1)\big\}.
 \endaligned
\end{equation*}
Setting $x=n$ $(n \in \mathbb{N})$ and using \eqref{e:1} and \eqref{e:2} gives
\begin{equation}\label{Thm2.1-eq-pf3}
 \aligned
 P''(n)  &=\big\{(H_n-\gamma)^2+\zeta(2)-H_n^{(2)}\big\}\sum_{k=1}^{n}\frac{(-1)^{k-1}}{k^m}\binom{n}{k}\\
&\hskip 3mm -2\,(H_n-\gamma)\,\sum_{k=1}^{\infty}\frac{(-1)^{k-1}}{k^m}\binom{n}{k}\psi(n-k+1)\\
&\hskip 3mm+\sum_{k=1}^{\infty}\frac{(-1)^{k-1}}{k^m}\binom{n}{k}\big\{\psi^2(n-k+1)-\psi^\prime(n-k+1)\big\}.
 \endaligned
\end{equation}
Employing \eqref{e:1} and \eqref{e:2}, we obtain
\begin{equation}\label{Thm2-1-pf4}
\aligned
 & \sum_{k=1}^{\infty}\frac{(-1)^{k-1}}{k^m}\binom{n}{k}\big\{\psi^2(n-k+1)-\psi^\prime(n-k+1)\big\}\\
 &\hskip 3mm = \sum_{k=1}^{n}\frac{(-1)^{k-1}}{k^m}\binom{n}{k}\big\{(H_{n-k}-\gamma)^2 +H_{n-k}^{(2)}-\zeta (2)\big\}\\
  &\hskip 6mm + \sum_{k=n+1}^{\infty}\frac{(-1)^{k-1}}{k^m}\binom{n}{k}\big\{\psi^2(n-k+1)-\psi^\prime(n-k+1)\big\}.
\endaligned
\end{equation}
We get
\begin{equation*}
\aligned
  & \sum_{k=n+1}^{\infty}\frac{(-1)^{k-1}}{k^m}\binom{n}{k}\big\{\psi^2(n-k+1)-\psi^\prime(n-k+1)\big\}\\
&\hskip 3mm =\sum_{k=n+1}^{\infty}\frac{(-1)^{k-1}\,n!}{k^m\,k!}\,\frac{\psi^2(n-k+1)-\psi^\prime(n-k+1)}{\Gamma(n-k+1)},
\endaligned
\end{equation*}
which, upon setting $k-n-1=k'$ and dropping the prime on $k$, with the aid of \eqref{LemB-eq2}, offers
\begin{equation}\label{Thm2-1-pf4-a}
\aligned
  & \sum_{k=n+1}^{\infty}\frac{(-1)^{k-1}}{k^m}\binom{n}{k}\big\{\psi^2(n-k+1)-\psi^\prime(n-k+1)\big\}\\
&\hskip 3mm =\sum_{k=0}^{\infty}\frac{(-1)^{n+k}\,n!}{(k+n+1)^m\,(k+n+1)!}\,
\lim_{z \rightarrow -k}\frac{\psi^2(z)-\psi^\prime(z)}{\Gamma(z)}\\
&\hskip 3mm =2\,(-1)^{n+1}\,\sum_{k=0}^{\infty}\frac{n!}{(k+n+1)^m\,(k+n+1)!}\,
k!\,\psi(k+1)\\
&\hskip 3mm =2\,(-1)^{n+1}\,\sum_{k=0}^{\infty}\frac{\psi(k+1)}{(k+n+1)^{m+1}\,\binom{n+k}{k}}.
\endaligned
\end{equation}
Putting \eqref{Thm2-1-pf4-a} in \eqref{Thm2-1-pf4} provides
\begin{equation}\label{Thm2-1-pf4-b}
\aligned
 & \sum_{k=1}^{\infty}\frac{(-1)^{k-1}}{k^m}\binom{n}{k}\big\{\psi^2(n-k+1)-\psi^\prime(n-k+1)\big\}\\
 &\hskip 3mm = \sum_{k=1}^{n}\frac{(-1)^{k-1}}{k^m}\binom{n}{k}\big\{(H_{n-k}-\gamma)^2 +H_{n-k}^{(2)}-\zeta (2)\big\}\\
  &\hskip 6mm + 2\,(-1)^{n+1}\,\sum_{k=0}^{\infty}\frac{H_k -\gamma}{(k+n+1)^{m+1}\,\binom{n+k}{k}}.
\endaligned
\end{equation}

Setting \eqref{Thm2-1-pf3-cf} and \eqref{Thm2-1-pf4-b} in \eqref{Thm2.1-eq-pf3} gives
\begin{equation}\label{Thm2.1-eq-pf3-a}
 \aligned
 P''(n)  =& \big\{(H_n-\gamma)^2+\zeta(2)-H_n^{(2)}\big\}\sum_{k=1}^{n}\frac{(-1)^{k-1}}{k^m}\binom{n}{k}\\
& -2\,(H_n-\gamma)\,\sum_{k=1}^{n}\frac{(-1)^{k-1}}{k^m}\binom{n}{k}\left(H_{n-k}-\gamma\right) \\
&   +2\,(-1)^{n}\,H_n\,\sum_{k=0}^{\infty}\frac{1}{(k+n+1)^{m+1}\,\binom{n+k}{k}}\\
&+ \sum_{k=1}^{n}\frac{(-1)^{k-1}}{k^m}\binom{n}{k}\big\{(H_{n-k}-\gamma)^2 +H_{n-k}^{(2)}-\zeta (2)\big\}\\
  & + 2\,(-1)^{n+1}\,\sum_{k=0}^{\infty}\frac{H_k}{(k+n+1)^{m+1}\,\binom{n+k}{k}}.
 \endaligned
\end{equation}
From \eqref{dp(x)-b} and \eqref{Thm2.1-eq-pf3-a}, we derive
\begin{equation}\label{Th21-FR}
\aligned
  P''(n) -2\,H_{n}\,P'(n)= & \sum_{k=1}^{n}\frac{(-1)^{k-1}}{k^{m}}\,\binom{n}{k}\left(H_{n-k}^2 +H_{n-k}^{(2)}-H_n^2 -H_n^{(2)} \right)\\
                          & + 2\,(-1)^{n+1}\,\sum_{k=0}^{\infty}\frac{H_k}{(k+n+1)^{m+1}\,\binom{n+k}{k}}.
\endaligned
\end{equation}
Finally, \eqref{R-2-14} is used in the left member of \eqref{Th21-FR} to yield the desired result \eqref{Thm2.1-eq}.
\end{proof}

\vskip 3mm

The next corollary provides a proof of the Euler's classical formula \eqref{e:5}
as the particular case  of \eqref{Thm2.1-eq} when $n=0$.
\vskip 3mm

\begin{cor}\label{Cor2.2} Let $m \in \mathbb{Z}_{\geqslant 2}$. Then
\begin{equation*}
2\sum_{k=1}^{\infty}\frac{H_k}{(k+1)^m}=m\, \zeta(m+1)-\sum_{k=1}^{m-2}\,\zeta(k+1)\,\zeta(m-k).
\end{equation*}
\end{cor}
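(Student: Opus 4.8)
The plan is to specialize Theorem~\ref{Thm2.1} to the case $n=0$ and then simplify each term. When $n=0$, the binomial coefficient $\binom{n+k}{k}=\binom{k}{k}=1$, the harmonic numbers $H_n=H_0=0$ and $H_n^{(2)}=H_0^{(2)}=0$, and every finite sum $\sum_{k=1}^{n}$ becomes empty and hence vanishes. Thus the first and third terms on the right-hand side of \eqref{Thm2.1-eq} disappear entirely (the third because of the factor $H_n=0$), leaving only
\begin{equation*}
\sum_{k=0}^{\infty}\frac{H_k}{(k+1)^{m+1}}= -\frac{(-1)^{m}}{2\,m!}\,\frac{\partial^2}{\partial x^2}\left\{\frac{\partial^m}{\partial z^m}\frac{\Gamma(x+1)\Gamma(z)}{\Gamma(z+x)}\right\}\bigg|_{\substack{x=0\\ z=1}}.
\end{equation*}
Since $H_0=0$, the left side equals $\sum_{k=1}^{\infty}H_k/(k+1)^{m+1}$; replacing the integer $m$ by $m-1$ (allowed since $m\in\mathbb{Z}_{\geqslant 2}$ forces $m-1\in\mathbb{N}$) will match the normalization of Euler's formula \eqref{e:5}, which has exponent $m$ and runs over $\mathbb{Z}_{\geqslant 2}$.

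First I would carry out the substitution $n=0$ carefully, justifying that Theorem~\ref{Thm2.1} applies (taking $x=0\in\mathbb{R}\setminus\mathbb{Z}_{\leqslant-1}$ and $z=1\in\mathbb{R}\setminus\mathbb{Z}_{\leqslant 0}$ and $m\in\mathbb{N}$ after reindexing). Next I would evaluate the mixed partial derivative of $g(x,z):=\Gamma(x+1)\Gamma(z)/\Gamma(z+x)$ at $(x,z)=(0,1)$. The cleanest route is to work with $\log g(x,z)=\log\Gamma(x+1)+\log\Gamma(z)-\log\Gamma(z+x)$ and exploit that $g(0,z)\equiv 1$, so that at $x=0$ all pure $z$-derivatives of $g$ vanish and the only surviving contributions come from differentiating in $x$. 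Expanding $g$ in powers of $x$ around $x=0$ with coefficients that are functions of $z$, and then applying $\partial_x^2$ and setting $x=0$, reduces the computation to the Taylor coefficient of $x^2$ in $g(x,z)$, differentiated $m$ times in $z$ and evaluated at $z=1$. Writing $\log g = \sum_{j\geqslant 1}\frac{(-1)^{j}}{j}\big(\psi^{(j-1)}(1)-\psi^{(j-1)}(z)\big)\frac{x^{j}}{ (j-1)!}$-type expansions (i.e.\ using $\log\Gamma(x+1)-\log\Gamma(z+x)+\log\Gamma(z)$ and the Taylor series of $\log\Gamma$) will express the $x^2$-coefficient of $g$ in terms of $\psi$, $\psi'$ and products thereof evaluated at $1$ and at $z$, hence ultimately in terms of $\zeta$-values after $m$-fold $z$-differentiation and setting $z=1$, via \eqref{polygamma}.

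The main obstacle will be organizing this derivative computation so that the convolution $\sum_{k=1}^{m-2}\zeta(k+1)\zeta(m-k)$ emerges cleanly rather than as an unrecognizable mess of polygamma values. The key algebraic fact to deploy is that the $x^2$-coefficient of $g(x,z)=\exp(\log g)$ is $\frac{1}{2}(\log g)_1^2+(\log g)_2$, where $(\log g)_1,(\log g)_2$ are the $x^1$- and $x^2$-coefficients of $\log g(x,z)$; here $(\log g)_1=\psi(1)-\psi(z)=-\psi(z)-\gamma$ and $(\log g)_2=\tfrac12(\psi'(z)-\psi'(1))=\tfrac12\psi'(z)-\tfrac12\zeta(2)$. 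Differentiating the square term $m$ times in $z$ via the Leibniz rule produces the convolution of polygamma functions, which under \eqref{polygamma} and evaluation at $z=1$ becomes exactly $-\sum_{k=1}^{m-2}\zeta(k+1)\zeta(m-k)$ (after accounting for signs, factorials, and the $1/(2\,m!)$ prefactor), while the linear term contributes the $m\,\zeta(m+1)$ piece. I would therefore present the proof as: (1) specialize and reindex; (2) reduce to the $x^2$-Taylor coefficient of $g$; (3) split into ``square'' and ``linear'' parts of $\log g$; (4) apply Leibniz, invoke \eqref{polygamma}, set $z=1$, and collect to recover \eqref{e:5}.
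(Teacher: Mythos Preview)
Your approach is essentially the same as the paper's: specialize \eqref{Thm2.1-eq} to $n=0$, swap the order of differentiation, replace $m$ by $m-1$, compute $\partial_x^2 g|_{x=0}$ (the paper records this as \eqref{e:21}), then apply Leibniz to $\psi^2(z)$ and use \eqref{e:2} to extract the zeta convolution. One slip to fix: since $\partial_x^2\log g|_{x=0}=\psi'(1)-\psi'(z)$, you should have $(\log g)_2=\tfrac12\big(\zeta(2)-\psi'(z)\big)$, not $\tfrac12\psi'(z)-\tfrac12\zeta(2)$; with this sign corrected your $x^2$-coefficient computation reproduces \eqref{e:21} exactly and the rest goes through.
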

\begin{proof}Setting $n=0$ in \eqref{Thm2.1-eq} may yield
\begin{equation}\label{e:20}
\sum_{k=0}^{\infty}\frac{H_k}{(k+1)^{m+1}}
=\frac{(-1)^{m+1}}{2m!}\frac{\partial^2}{\partial x^2}\left\{\frac{\partial^m}{\partial z^m}\frac{\Gamma(x+1)\Gamma (z)}{\Gamma (z+x)}\right\}\bigg|_{\begin{subarray}{l}
       x=0\\
       z=1
      \end{subarray}}.
\end{equation}
Interchanging the order of differentiations with respect to $x$ and $z$ in (\ref{e:20}),
which can be guaranteed since
\begin{equation*}
  \frac{\partial^2}{\partial x^2}\left\{\frac{\partial^m}{\partial z^m}\frac{\Gamma(x+1)\Gamma (z)}{\Gamma (z+x)}\right\}
\quad \text{and} \quad
\frac{\partial^m}{\partial z^m} \left\{\frac{\partial^2}{\partial x^2}\frac{\Gamma(x+1)\Gamma (z)}{\Gamma (z+x)}\right\}
\end{equation*}
are analytic and so continuous on $x \in \mathbb{C}\setminus \mathbb{Z}_{\leqslant -1}$ and $z \in \mathbb{C}\setminus \mathbb{Z}_{\leqslant 0}$,
 replacing $m$ by $m-1$, and noticing that
\begin{align}\label{e:21}
\frac{d^2}{dx^2}\frac{\Gamma(x+1)\Gamma (z)}{\Gamma (z+x)}\bigg|_{x=0}=\psi^2(z)+2 \gamma  \psi(z)-\psi ^\prime(z)+\gamma ^2+\frac{\pi ^2}{6},
\end{align}
we obtain
\allowdisplaybreaks
\begin{align*}
&2\sum_{k=1}^{\infty}\frac{H_k}{(k+1)^{m}}=\frac{(-1)^{m}}{(m-1)!}\frac{\partial^{m-1}}{\partial z^{m-1}}\left[\psi^2(z)+2 \gamma  \psi(z)-\psi ^\prime(z)\right]_{z=1}\\
&=\frac{(-1)^{m}}{(m-1)!}\bigg\{\sum_{k=0}^{m-1}\binom{m-1}{k}\psi^{(k)}(1)\psi^{(m-k-1)}(1)+2\gamma\psi^{(m-1)}(1)-\psi^{(m)}(1)\bigg\}\\
&=\frac{(-1)^{m}}{(m-1)!}\bigg\{\sum_{k=1}^{m-2}\binom{m-1}{k}\psi^{(k)}(1)\psi^{(m-k-1)}(1)-\psi^{(m)}(1)\\
&\hskip 3mm +\underbrace{2\psi(1)\psi^{({(m-1)})}(1)+2\gamma\psi^{(m-1)}(1)}_{0}\bigg\},
\end{align*}
which, upon using \eqref{e:2}, yields the desired result.
\end{proof}

\vskip 3mm

\begin{thm}\label{Theorem2.3}  Let $n \in \mathbb{Z}_{\geqslant 0}$  and $m\in\mathbb{N}$. Then
\begin{equation}\label{Thm23-eq}
\aligned
&\sum_{k=0}^{\infty}\frac{H_k^2-H_k^{(2)}}{(n+k+1)^{m+1}\binom{n+k}{k}}
  =\frac{(-1)^{n-1}}{3}\sum_{k=1}^{n}\frac{(-1)^{k-1}}{k^m}\binom{n}{k}\\
 & \times \Big\{H_n^3+2H_n^{(3)} +3H_nH_n^{(2)}-H_{n-k}^3-2H_{n-k}^{(3)}-3H_{n-k}H_{n-k}^{(2)}\Big\}\\
&+\frac{(-1)^{m+n}}{m!}\Big\{ \big(H_n^2+H_n^{(2)}\big)\,F_1(n,m)- H_n\,F_2(n,m)+ \frac{1}{3}\,F_3(n,m)\Big\},
\endaligned
\end{equation}
where
\begin{equation*}
  F_1(n,m):= \frac{\partial}{\partial x}\bigg\{\frac{\partial^m}{\partial z^m}\frac{\Gamma(x+1)\Gamma(z)}{\Gamma(x+z)}\bigg\}\bigg|_{\begin{subarray}{l}
       x=n\\
       z=1
      \end{subarray}},
\end{equation*}
\begin{equation*}
  F_2(n,m):= \frac{\partial^2}{\partial x^2}\bigg\{\frac{\partial^m}{\partial z^m}\frac{\Gamma(x+1)\Gamma(z)}{\Gamma(x+z)}\bigg\}\bigg|_{\begin{subarray}{l}
       x=n\\
       z=1
      \end{subarray}},
\end{equation*}
and
\begin{equation*}
  F_3(n,m):= \frac{\partial^3}{\partial x^3}\bigg\{\frac{\partial^m}{\partial z^m}\frac{\Gamma(x+1)\Gamma(z)}{\Gamma(x+z)}\bigg\}\bigg|_{\begin{subarray}{l}
       x=n\\
       z=1
      \end{subarray}};
\end{equation*}
\end{thm}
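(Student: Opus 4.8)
The plan is to run the argument of Theorem~\ref{Thm2.1} one derivative higher, using the third $x$-derivative of the auxiliary function $P(x)$ of \eqref{Thm2.1-eq-pf1}. Since $P(x)=\frac{(-1)^{m}}{m!}\frac{\partial^{m}}{\partial z^{m}}\frac{\Gamma(x+1)\Gamma(z)}{\Gamma(z+x)}\big|_{z=1}$ by \eqref{R-2-14}, differentiating in $x$ gives $P^{(j)}(n)=\frac{(-1)^{m}}{m!}F_{j}(n,m)$ for $j=1,2,3$, so it suffices to prove the closed identity
\[
P'''(n)-3H_{n}P''(n)+3\bigl(H_{n}^{2}+H_{n}^{(2)}\bigr)P'(n)=3(-1)^{n}\sum_{k=0}^{\infty}\frac{H_{k}^{2}-H_{k}^{(2)}}{(n+k+1)^{m+1}\binom{n+k}{k}}+\Sigma_{n},
\]
where $\Sigma_{n}:=\sum_{k=1}^{n}\frac{(-1)^{k-1}}{k^{m}}\binom{n}{k}\bigl\{H_{n}^{3}+2H_{n}^{(3)}+3H_{n}H_{n}^{(2)}-H_{n-k}^{3}-2H_{n-k}^{(3)}-3H_{n-k}H_{n-k}^{(2)}\bigr\}$; dividing by $3(-1)^{n}$ and using \eqref{e:1} and \eqref{e:2} then gives \eqref{Thm23-eq}.

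As in the proof of Theorem~\ref{Thm2.1}, the series $P(x)$ can be differentiated term-by-term three times on $\mathbb{R}\setminus\mathbb{Z}_{\leqslant-1}$ (the Weierstrass $M$-test argument there applies, with $P'''_{k}(x)=O(x^{k})$). Writing $A=\psi(x+1)-\psi(x-k+1)$, $B=\psi'(x+1)-\psi'(x-k+1)$, $C=\psi''(x+1)-\psi''(x-k+1)$, and using $\frac{d}{dx}\binom{x}{k}=\binom{x}{k}A$ with $A'=B$ and $B'=C$, one gets termwise $P''(x)=\sum_{k=1}^{\infty}\frac{(-1)^{k-1}}{k^{m}}\binom{x}{k}(A^{2}+B)$ and then $P'''(x)=\sum_{k=1}^{\infty}\frac{(-1)^{k-1}}{k^{m}}\binom{x}{k}(A^{3}+3AB+C)$. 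Putting $x=n$ and regrouping by the polynomial identity
\[
A^{3}+3AB+C=(a^{3}+3ab+c)-3(a^{2}+b)\bar a+3a\bigl(\bar a^{2}-\bar b\bigr)-\bigl(\bar a^{3}-3\bar a\bar b+\bar c\bigr),
\]
where $a=\psi(n+1),\,b=\psi'(n+1),\,c=\psi''(n+1)$ and $\bar a=\psi(n-k+1),\,\bar b=\psi'(n-k+1),\,\bar c=\psi''(n-k+1)$, the $\bar a,\bar b,\bar c$-parts weighted by $\binom{n}{k}$ are precisely $\sum_{k}\frac{(-1)^{k-1}}{k^{m}}\binom{n}{k}\psi(n-k+1)$, $\sum_{k}\frac{(-1)^{k-1}}{k^{m}}\binom{n}{k}\{\psi^{2}(n-k+1)-\psi'(n-k+1)\}$ and the \emph{new} sum $\sum_{k}\frac{(-1)^{k-1}}{k^{m}}\binom{n}{k}\{\psi^{3}(n-k+1)-3\psi(n-k+1)\psi'(n-k+1)+\psi''(n-k+1)\}$; the first two have already been evaluated inside the proof of Theorem~\ref{Thm2.1}, via \eqref{Thm2-1-pf3-cf} and \eqref{Thm2-1-pf4-b}.

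For the new sum I would peel off the tail $k\geqslant n+1$ exactly as in \eqref{Thm2-1-pf4-a}: after $k\mapsto k+n+1$ its generic term is $\frac{(-1)^{n+k}n!}{(k+n+1)^{m}(k+n+1)!}\lim_{z\to-k}\frac{\psi^{3}(z)-3\psi(z)\psi'(z)+\psi''(z)}{\Gamma(z)}$, and \eqref{LemB-eq3} (with $\psi(k+1)=H_{k}-\gamma$) converts it into $3(-1)^{n}\sum_{k=0}^{\infty}\frac{\zeta(2)+H_{k}^{(2)}-(H_{k}-\gamma)^{2}}{(n+k+1)^{m+1}\binom{n+k}{k}}$, while the finite part $k\leqslant n$ is rewritten through \eqref{e:1} and \eqref{e:2}. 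Expanding $(H_{k}-\gamma)^{2}=H_{k}^{2}-2\gamma H_{k}+\gamma^{2}$ exposes the target $S_{2}:=\sum_{k=0}^{\infty}\frac{H_{k}^{2}-H_{k}^{(2)}}{(n+k+1)^{m+1}\binom{n+k}{k}}$ together with $S_{1}:=\sum_{k=0}^{\infty}\frac{H_{k}}{(n+k+1)^{m+1}\binom{n+k}{k}}$ and $S_{0}:=\sum_{k=0}^{\infty}\frac{1}{(n+k+1)^{m+1}\binom{n+k}{k}}$. Collecting all contributions, $P'''(n)=E_{n}+3(-1)^{n}\bigl(H_{n}^{2}-H_{n}^{(2)}\bigr)S_{0}-6(-1)^{n}H_{n}S_{1}+3(-1)^{n}S_{2}$ for an explicit finite sum $E_{n}$ over $k=1,\dots,n$; then I would eliminate $S_{0}$ by \eqref{dp(x)-b} (which writes $(-1)^{n}S_{0}$ as $P'(n)$ minus a finite sum) and $S_{1}$ by \eqref{Th21-FR} (which writes $2(-1)^{n}S_{1}$ as a finite sum minus $P''(n)-2H_{n}P'(n)$), and solve for $S_{2}$.

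The step I expect to be the main obstacle is the concluding bookkeeping: one must verify that all $\gamma$'s cancel and that the leftover finite double sum collapses to $\Sigma_{n}$. The key simplification is that at $x=n$ the differences are $\gamma$-free, namely $A=H_{n}-H_{n-k}$, $B=H_{n-k}^{(2)}-H_{n}^{(2)}$ and $C=2\bigl(H_{n}^{(3)}-H_{n-k}^{(3)}\bigr)$, so $E_{n}=\sum_{k=1}^{n}\frac{(-1)^{k-1}}{k^{m}}\binom{n}{k}\bigl(D_{1}^{3}-3D_{1}D_{2}+2D_{3}\bigr)$ with $D_{j}:=H_{n}^{(j)}-H_{n-k}^{(j)}$; combining $E_{n}$ with the finite parts of \eqref{dp(x)-b} and \eqref{Th21-FR}, and repeatedly using $D_{1}-H_{n}=-H_{n-k}$, reduces the bracket term by term to $-\bigl(H_{n}^{3}+2H_{n}^{(3)}+3H_{n}H_{n}^{(2)}-H_{n-k}^{3}-2H_{n-k}^{(3)}-3H_{n-k}H_{n-k}^{(2)}\bigr)$, i.e.\ to $-\Sigma_{n}$. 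Substituting $P^{(j)}(n)=\frac{(-1)^{m}}{m!}F_{j}(n,m)$ then produces the displayed identity, hence \eqref{Thm23-eq}.
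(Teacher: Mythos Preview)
Your proposal is correct and follows exactly the paper's own argument: differentiate the auxiliary function $P(x)$ of \eqref{Thm2.1-eq-pf1} three times term by term, set $x=n$, split off the tail $k\geqslant n+1$ and evaluate it via \eqref{LemB-eq3}, then combine with the already-computed expressions \eqref{dp(x)-b} and \eqref{Th21-FR} (equivalently \eqref{Thm2.1-eq-pf3}) to isolate $S_{2}$ and collapse the finite remainder to $\Sigma_{n}$. One small slip: in your last paragraph the bracket actually reduces to $+\bigl(H_{n}^{3}+2H_{n}^{(3)}+3H_{n}H_{n}^{(2)}-H_{n-k}^{3}-2H_{n-k}^{(3)}-3H_{n-k}H_{n-k}^{(2)}\bigr)$, i.e.\ to $+\Sigma_{n}$, which is precisely what your displayed target identity requires.
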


\begin{proof}
As in the proof of Theorem \ref{Thm2.1}, let $P(x)$ be the same as in \eqref{Thm2.1-eq-pf1}. Then we may differentiate
$P(x)$ with respect to $x$ three times term-by-term. Then putting $x=n$ $(n \in \mathbb{N})$ in the $P^{(3)}(x)$, with the aid of \eqref{e:1} and \eqref{e:2},  we find
\begin{equation}\label{Th33-pf1}
\aligned
&P^{(3)}(n)=\big\{(H_n-\gamma)^3+3(H_n-\gamma)(\zeta(2)-H_n^{(2)})\\
&\hskip 20mm +2(H_n^{(3)}-\zeta(3))\big\}\sum_{k=1}^{n}\frac{(-1)^{k-1}}{k^m}\binom{n}{k}\\
&\hskip 3mm-3\left\{(H_n-\gamma)^2+(\zeta(2)-H_n^{(2)})\right\}\sum_{k=1}^{\infty}\frac{(-1)^{k-1}}{k^m}\binom{n}{k}\psi(n-k+1)\\
&\hskip 3mm +3(H_n-\gamma)\sum_{k=1}^{\infty}\frac{(-1)^{k-1}}{k^m}\binom{n}{k}\left\{\psi^2(n-k+1)-\psi^\prime(n-k+1)\right\}\\
&\hskip 3mm -\sum_{k=1}^{\infty}\frac{(-1)^{k-1}}{k^m}\binom{n}{k}\big\{\psi^3(n-k+1)-3\psi(n-k+1)\psi^\prime(n-k+1)\\
&\hskip 45mm +\psi^{\prime\prime}(n-k+1)\big\}.
\endaligned
\end{equation}
Here, we consider
\begin{equation*}
  \aligned
&\sum_{k=1}^{\infty}\frac{(-1)^{k-1}}{k^m}\binom{n}{k}\big\{\psi^3(n-k+1)-3\psi(n-k+1)\psi^\prime(n-k+1)\\
&\hskip 35mm +\psi^{\prime\prime}(n-k+1)\big\}\\
&=\sum_{k=1}^{n}\frac{(-1)^{k-1}}{k^m}\binom{n}{k}\big\{(H_{n-k}-\gamma)^3-3(H_{n-k}-\gamma)(\zeta (2)-H_{n-k}^{(2)})\\
&\hskip 35mm +2\,H_{n-k}^{(3)}-2\, \zeta (3)\big\}\\
 &+ \sum_{k=n+1}^{\infty}\frac{(-1)^{k-1}}{k^m}\binom{n}{k}\big\{\psi^3(n-k+1)-3\psi(n-k+1)\psi^\prime(n-k+1)\\
&\hskip 35mm +\psi^{\prime\prime}(n-k+1)\big\}.
 \endaligned
\end{equation*}
As in getting \eqref{Thm2-1-pf4-b}, we use \eqref{LemB-eq3} to obtain
\begin{equation*}
\aligned
  & \sum_{k=n+1}^{\infty}\frac{(-1)^{k-1}}{k^m}\binom{n}{k}\big\{\psi^3(n-k+1)-3\psi(n-k+1)\psi^\prime(n-k+1)\\
&\hskip 35mm +\psi^{\prime\prime}(n-k+1)\big\}\\
&= 3\,(-1)^n\, \sum_{k=0}^{\infty}\, \frac{H_k^{(2)}-H_k^2 + \zeta (2) +2\gamma\,H_k-\gamma^2}{(k+n+1)^{m+1}\,\binom{k+n}{k}}
  \endaligned
\end{equation*}
We therefore have
\begin{equation}\label{Th33-pf2}
\aligned
  &\sum_{k=1}^{\infty}\frac{(-1)^{k-1}}{k^m}\binom{n}{k}\big\{\psi^3(n-k+1)-3\psi(n-k+1)\psi^\prime(n-k+1)\\
&\hskip 35mm +\psi^{\prime\prime}(n-k+1)\big\}\\
&=\sum_{k=1}^{n}\frac{(-1)^{k-1}}{k^m}\binom{n}{k}\big\{(H_{n-k}-\gamma)^3-3(H_{n-k}-\gamma)(\zeta (2)-H_{n-k}^{(2)})\\
&\hskip 35mm +2\,H_{n-k}^{(3)}-2\, \zeta (3)\big\}\\
 &\hskip 3mm +3\,(-1)^n\, \sum_{k=0}^{\infty}\, \frac{H_k^{(2)} + \zeta (2) -(H_k-\gamma)^2}{(k+n+1)^{m+1}\,\binom{k+n}{k}}.
\endaligned
\end{equation}

Employing \eqref{Thm2-1-pf3-cf}, \eqref{Thm2-1-pf4-b} and \eqref{Th33-pf2} in \eqref{Th33-pf1}, we may find
\begin{equation*}
\aligned
&P^{(3)}(n)=\big\{(H_n-\gamma)^3+3(H_n-\gamma)(\zeta(2)-H_n^{(2)})\\
&\hskip 20mm +2(H_n^{(3)}-\zeta(3))\big\}\sum_{k=1}^{n}\frac{(-1)^{k-1}}{k^m}\binom{n}{k}\\
\endaligned
\end{equation*}
\begin{equation*}
\aligned
&-3\left\{(H_n-\gamma)^2+(\zeta(2)-H_n^{(2)})\right\}\bigg[\sum_{k=1}^{n}\frac{(-1)^{k-1}}{k^m}\binom{n}{k}\left(H_{n-k}-\gamma\right) \\
&\hskip 30mm   +(-1)^{n+1}\,\sum_{k=0}^{\infty}\frac{1}{(k+n+1)^{m+1}\,\binom{n+k}{k}}\bigg]\\
\endaligned
\end{equation*}

\begin{equation*}
\aligned
&\hskip 3mm +3(H_n-\gamma)\bigg[\sum_{k=1}^{n}\frac{(-1)^{k-1}}{k^m}\binom{n}{k}\big\{(H_{n-k}-\gamma)^2 +H_{n-k}^{(2)}-\zeta (2)\big\}\\
  &\hskip 30mm + 2\,(-1)^{n+1}\,\sum_{k=0}^{\infty}\frac{H_k -\gamma}{(k+n+1)^{m+1}\,\binom{n+k}{k}}\bigg]\\
\endaligned
\end{equation*}

\begin{equation*}
\aligned
&\hskip 3mm -\sum_{k=1}^{n}\frac{(-1)^{k-1}}{k^m}\binom{n}{k}\big\{(H_{n-k}-\gamma)^3-3(H_{n-k}-\gamma)(\zeta (2)-H_{n-k}^{(2)})\\
&\hskip 35mm +2\,H_{n-k}^{(3)}-2\, \zeta (3)\big\}\\
 &\hskip 3mm -3\,(-1)^n\, \sum_{k=0}^{\infty}\, \frac{H_k^{(2)}-H_k^2 + \zeta (2) +2\gamma\,H_k-\gamma^2}{(k+n+1)^{m+1}\,\binom{k+n}{k}}.
\endaligned
\end{equation*}

Finally, using \eqref{dp(x)-b}, \eqref{Thm2.1-eq-pf3}, and the  expression $P^{(3)}(n)$ just obtained,
 as in getting the result in Theorem \ref{Thm2.1}, we can readily establish \eqref{Thm23-eq}.
\end{proof}

\vskip 3mm
A particular case of \eqref{Thm23-eq} when $n=0$ produces the identity in Corollary \ref{Cor2.4}.
\vskip 3mm

\begin{cor}\label{Cor2.4}
Let  $m\in\mathbb{N}$. Then
\begin{equation}\label{Cor2.4-eq}
\aligned
&\sum_{k=0}^{\infty}\frac{H_k^2-H_k^{(2)}}{(k+1)^{m+1}} = \frac{(-1)^{m}}{3\, m!}\,F_3(0,m)\\
&\hskip 3mm =  \frac{(m+1)(m+2)}{3}\, \zeta (m+3)
 - \sum_{j=1}^{m-1}\,(j+1)\,\zeta (j+2)\,\zeta (m+1-j)\\
&\hskip 6mm +\frac{1}{m}\, \sum_{\ell=1}^{m-1}\,(m-\ell)\,\zeta (m-\ell+1)\,\sum_{j=1}^{\ell-1}\,\zeta (j+1)\,\zeta (\ell-j+1).
\endaligned
\end{equation}
Also
\begin{equation}\label{Cor2.4-eq-a}
\aligned
&\sum_{k=1}^{\infty}\frac{H_k^2-H_k^{(2)}}{k^{m+1}}=\frac{(-1)^{m}}{3\, m!}\,F_3(0,m)\\
&\hskip 6mm  + (m+2)\,\zeta (m+3) - \sum_{j=1}^{m}\,\zeta (j+1)\,\zeta (m+2-j) \\
&\hskip 3mm =  \frac{(m+2)(m+4)}{3}\, \zeta (m+3)-\zeta (2)\,\zeta (m+1) \\
&\hskip 6mm - 2\sum_{j=2}^{m}\,\zeta (j+1)\,\zeta (m+2-j)
 - \sum_{j=1}^{m-1}\,j\,\zeta (j+2)\,\zeta (m+1-j)\\
&\hskip 6mm  +\frac{1}{m}\, \sum_{\ell=1}^{m-1}\,(m-\ell)\,\zeta (m-\ell+1)\,\sum_{j=1}^{\ell-1}\,\zeta (j+1)\,\zeta (\ell-j+1),
\endaligned
\end{equation}
where
\begin{equation*}
  F_3(0,m)= \frac{\partial^3}{\partial x^3}\bigg\{\frac{\partial^m}{\partial z^m}\frac{\Gamma(x+1)\Gamma(z)}{\Gamma(x+z)}\bigg\}\bigg|_{\begin{subarray}{l}
       x=0\\
       z=1
      \end{subarray}}.
\end{equation*}
\end{cor}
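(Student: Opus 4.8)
The plan is to specialize Theorem~\ref{Theorem2.3} to the case $n=0$ and then unwind the resulting derivative expression into closed form. When $n=0$, the finite sums $\sum_{k=1}^{n}(\cdots)$ are empty, so \eqref{Thm23-eq} collapses to $\sum_{k=0}^{\infty}\frac{H_k^2-H_k^{(2)}}{(k+1)^{m+1}} = \frac{(-1)^{m}}{3\,m!}F_3(0,m)$, since $H_0=H_0^{(2)}=0$ kills the $F_1$ and $F_2$ terms. This gives the first equality of \eqref{Cor2.4-eq} immediately. The remaining work is purely to evaluate $F_3(0,m)$.

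To do this I would interchange the order of the $\partial^3/\partial x^3$ and $\partial^m/\partial z^m$ differentiations — legitimate because, as noted in the proof of Corollary~\ref{Cor2.2}, both mixed partials are analytic (hence continuous) on the relevant domains — and first compute $\frac{\partial^3}{\partial x^3}\frac{\Gamma(x+1)\Gamma(z)}{\Gamma(x+z)}\big|_{x=0}$. Writing $g(x,z)=\Gamma(x+1)\Gamma(z)/\Gamma(x+z)$, logarithmic differentiation gives $g_x/g = \psi(x+1)-\psi(x+z)$, and repeated differentiation produces $g_{xxx}$ as a polynomial in $\psi^{(j)}(x+1)-\psi^{(j)}(x+z)$, $j=0,1,2$, times $g$. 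Evaluating at $x=0$ (where $g(0,z)=1$) yields an expression of the form
\begin{equation*}
\aligned
\frac{\partial^3}{\partial x^3}g(x,z)\Big|_{x=0}
 &= \big(\psi(1)-\psi(z)\big)^3 + 3\big(\psi(1)-\psi(z)\big)\big(\psi'(1)-\psi'(z)\big)\\
 &\quad + \big(\psi''(1)-\psi''(z)\big),
\endaligned
\end{equation*}
paralleling the $x=0$ formula \eqref{e:21} used for the second derivative. One then applies $\frac{(-1)^m}{3\,m!}\frac{\partial^m}{\partial z^m}(\cdots)\big|_{z=1}$. Here the cube term needs the trinomial (generalized Leibniz) expansion, and all terms involving $\psi^{(j)}(1)$ with $j\ge 1$ combine with the $-\psi^{(j)}(z)$ pieces; the surviving $z$-derivatives of $\psi^{(j)}(z)$ at $z=1$ are $\psi^{(j+k)}(1)$, which convert to zeta values via $\psi^{(r)}(1) = (-1)^{r+1} r!\,\zeta(r+1)$ (the $n=0$ case of \eqref{e:2}, equivalently \eqref{polygamma}). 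Careful bookkeeping of the multinomial coefficients and the index shifts, together with the observation that terms containing a bare $\psi(1)=-\gamma$ factor assemble into the $\gamma$-free combinations (as happened in Corollary~\ref{Cor2.2}), should produce exactly the double-zeta and triple-zeta sums in the second line of \eqref{Cor2.4-eq}, with the inner sum over $j$ coming from the cube term's convolution structure and the factor $1/m$ from the $\binom{m}{\ell}$ coefficient attached to $\psi^{(m-\ell)}(1)\psi^{(\ell)}(1)$-type products.

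For the companion identity \eqref{Cor2.4-eq-a}, I would pass from $\sum_{k=0}^\infty \frac{H_k^2-H_k^{(2)}}{(k+1)^{m+1}}$ to $\sum_{k=1}^\infty \frac{H_k^2-H_k^{(2)}}{k^{m+1}}$ by the standard shift $H_k = H_{k-1}+1/k$, expanding $H_k^2-H_k^{(2)} = H_{k-1}^2 + 2H_{k-1}/k - H_{k-1}^{(2)}$ (the $1/k^2$ terms cancel), which expresses the difference of the two series in terms of the linear Euler sums $\sum H_k/k^{m+1}$ and $\sum H_k/k^{m+2}$ (equivalently $\sum H_k/(k+1)^{\bullet}$); these are evaluated by Euler's formula \eqref{e:5-a} / Corollary~\ref{Cor2.2}. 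Substituting and collecting zeta values gives the first form of \eqref{Cor2.4-eq-a}; rewriting $F_3(0,m)$ via \eqref{Cor2.4-eq} and merging the two resulting zeta sums (splitting off the $j=1$ term $\zeta(2)\zeta(m+1)$ and absorbing the rest into the $2\sum_{j\ge 2}$ term) yields the final explicit form. The main obstacle I anticipate is the multinomial bookkeeping in differentiating $z\mapsto (\psi(1)-\psi(z))^3$ $m$ times and correctly matching every index shift so the triple sum and the coefficient $1/m$ emerge cleanly; the rest is routine once \eqref{e:2} is invoked to turn polygamma values at $1$ into zeta values.
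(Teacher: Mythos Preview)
Your proposal is correct and follows essentially the same route as the paper: specialize Theorem~\ref{Theorem2.3} at $n=0$, swap the $x$- and $z$-differentiations, compute the third $x$-derivative at $x=0$ (the paper records one extra $z$-derivative of this before proceeding, but that is only cosmetic), expand the remaining $z$-derivatives by Leibniz and convert $\psi^{(r)}(1)$ to zeta values, and then obtain \eqref{Cor2.4-eq-a} from \eqref{Cor2.4-eq} via the index shift $H_k=H_{k-1}+1/k$ together with Euler's formula \eqref{e:5-a}. The bookkeeping concern you flag is exactly where the paper also spends its effort, and your outline matches theirs.
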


\begin{proof}
  Setting $n=0$ \eqref{Thm23-eq} gives
\begin{equation}\label{Cor24-pf1}
\aligned
&\sum_{k=0}^{\infty}\frac{H_k^2-H_k^{(2)}}{(k+1)^{m+1}}
  =
\frac{(-1)^{m}}{3\, m!}\,  F_3(0,m),
\endaligned
\end{equation}
where
\begin{equation*}
\aligned
  F_3(0,m) &= \frac{d^{m-1}}{d z^{m-1}}\bigg\{\frac{d}{dz}\frac{\partial^3}{\partial x^3}\frac{\Gamma(x+1)\Gamma(z)}{\Gamma(x+z)}\bigg|_{x=0}\bigg\} \bigg|_{z=1}.
\endaligned
\end{equation*}
We obtain
\begin{equation}\label{Cor24-pf2-a}
\aligned
&\frac{d}{d z}\bigg\{\frac{\partial^3}{\partial x^3}\frac{\Gamma(x+1)\Gamma(z)}{\Gamma(x+z)}\bigg|_{x=0}\bigg\}
  = -3\,\left(\gamma^2+ \zeta (2)\right)\,\psi'(z)+3 \gamma\,\psi^{(2)}(z) \\
 &  -\psi^{(3)}(z) + 3\,\psi(z)\,\psi^{(2)}(z)+3\,\left(\psi'(z)\right)^2
  -6\gamma\,\psi(z)\,\psi'(z)-3 (\psi(z))^2\,\psi'(z).
\endaligned
\end{equation}
Note that
     \begin{equation}\label{psi-rd-1-a}
   \frac{d^\ell}{dz^\ell}f(z):=\frac{d^\ell}{dz^\ell}(\psi(z))^2=\sum_{j=0}^{\ell}\,\binom{\ell}{j}\,\psi^{(j)}(z)\,\psi^{(\ell-j)}(z)
       \quad \left(\ell \in \mathbb{Z}_{\geqslant 0}\right),
     \end{equation}
which, upon putting $z=1$ and using \eqref{e:1} and \eqref{e:2}, yields
 \begin{equation}\label{psi-rd-1}
  \aligned
      &\frac{d^\ell}{dz^\ell}(\psi(z))^2\Big|_{z=1}  =\sum_{j=0}^{\ell}\,\binom{\ell}{j}\,\psi^{(j)}(1)\,\psi^{(\ell-j)}(1)\\
           &= 2 \gamma \,(-1)^\ell\,\ell!\,\zeta (\ell+1) +(-1)^\ell\,\ell!\, \sum_{j=1}^{\ell-1}\, \zeta (j+1)\, \zeta (\ell-j+1).
  \endaligned
           \end{equation}

Employing \label{Cor24-pf2-a}, with the aid of \eqref{e:1}, \eqref{e:2} and \eqref{psi-rd-1-a}, we derive
\begin{equation*}
\aligned
&\frac{\partial^m}{\partial z^m}\bigg\{\frac{\partial^3}{\partial x^3}\frac{\Gamma(x+1)\Gamma(z)}{\Gamma(x+z)}\bigg|_{x=0}\bigg\}
  = -3\,\left(\gamma^2+ \zeta (2)\right)\,\psi^{(m)}(z)+3 \gamma\,\psi^{(m+1)}(z) \\
 &\hskip 10mm   -\psi^{(m+2)}(z) +3\, \sum_{j=0}^{m-1}\,\binom{m-1}{j}\,\psi^{(2+j)}(z)\, \psi^{(m-1-j)}(z)\\
&\hskip 10mm +3\,\sum_{j=0}^{m-1}\,\binom{m-1}{j}\,\psi^{(j+1)}(z)\, \psi^{(m-j)}(z)\\
& \hskip 10mm -6\gamma\,\sum_{j=0}^{m-1}\,\binom{m-1}{j}\,\psi^{(j)}(z)\, \psi^{(m-j)}(z) \\
 &\hskip 10mm -3\, \sum_{\ell=0}^{m-1}\,\binom{m-1}{\ell}\,f^{(\ell)}(z)\, \psi^{(m-\ell)}(z),
\endaligned
\end{equation*}
  which, upon setting $z=1$, yields
 \begin{equation*}
\aligned
&\frac{(-1)^m}{3\,m!}\,F_3(0,m)
  = \zeta (2)\,\zeta (m+1)
   + \frac{(m+1)(m+2)}{3}\, \zeta (m+3) \\
&-\frac{1}{m} \sum_{j=0}^{m-2}\,(j+1)(j+2)\,\zeta (3+j)\,\zeta (m-j) \\
& - \frac{1}{m}\,\sum_{j=0}^{m-1}\,(j+1)\,(m-j)\,\zeta (j+2)\,\zeta (m+1-j)\\
&+\frac{1}{m}\, \sum_{\ell=1}^{m-1}\,(m-\ell)\,\zeta (m-\ell+1)\,\sum_{j=1}^{\ell-1}\,\zeta (j+1)\,\zeta (\ell-j+1).
\endaligned
\end{equation*}
Finally, the last expression may be simplified to yield the desired result \eqref{Cor2.4-eq}.

\vskip 3mm
Using \eqref{e:5-a}, we may obtain
\begin{equation}\label{Cor34-EulerI}
\aligned
& \sum_{k=0}^{\infty}\frac{H_k^2-H_k^{(2)}}{(k+1)^{m+1}}
 = \sum_{k=1}^{\infty}\frac{H_k^2-H_k^{(2)}}{k^{m+1}}-  (m+2)\,\zeta (m+3)\\
   &\hskip 23mm  + \sum_{k=1}^{m}\,\zeta (k+1)\,\zeta (m+2-k) \quad (m \in \mathbb{N}).
\endaligned
\end{equation}
Employing \eqref{Cor34-EulerI} in \eqref{Cor2.4-eq} produces \eqref{Cor2.4-eq-a}.
\end{proof}

\vskip 3mm
\begin{thm}\label{Theorem2.5}
Let $p \in \mathbb{C} \setminus \mathbb{Z}_{\leqslant 0}$,  $x \in \mathbb{C} \setminus \mathbb{Z}_{\leqslant -1}$ and  $m \in \mathbb{N}$. Then
\begin{equation}\label{e:25}
\sum_{k=0}^{\infty}\frac{(-1)^k}{(p+k)^{m+1}}\binom{x}{k}=\frac{(-1)^{m}}{m!}\frac{\partial^{m}}{\partial s^{m}}\frac{\Gamma(x+1)\Gamma(s)}{\Gamma(x+s+1)}\bigg|_{s=p}.
\end{equation}
\end{thm}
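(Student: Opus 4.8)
The plan is to obtain \eqref{e:25} from Euler's integral for the Beta function and then to differentiate term by term, in close analogy with the derivation of Theorem \ref{Cor2.1}.

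\emph{Step 1: a partial-fraction expansion of the Beta function.} Start from \eqref{beta}: for $\Re(s)>0$ and $\Re(x)>-1$,
\[
  \frac{\Gamma(x+1)\,\Gamma(s)}{\Gamma(x+s+1)}=B(s,\,x+1)=\int_0^1 t^{s-1}(1-t)^x\,dt.
\]
Insert the generalized binomial expansion $(1-t)^x=\sum_{k=0}^\infty\binom{x}{k}(-t)^k$, valid for $0\le t<1$, interchange the sum with the integral, and use $\int_0^1 t^{s+k-1}\,dt=1/(s+k)$ to arrive at
\[
  \frac{\Gamma(x+1)\,\Gamma(s)}{\Gamma(x+s+1)}=\sum_{k=0}^\infty\frac{(-1)^k\binom{x}{k}}{s+k}.
\]
The interchange is legitimate because the binomial series converges uniformly on each interval $[0,1-\eps]$ while the improper behaviour of the integrand at $t=1$ is controlled by $\Re(x)>-1$; a dominated-convergence (or Abel-type) argument then lets one pass to the limit $\eps\to0^{+}$.

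\emph{Step 2: differentiation under the summation sign.} Since $\dfrac{\partial^m}{\partial s^m}\dfrac{1}{s+k}=\dfrac{(-1)^m m!}{(s+k)^{m+1}}$, differentiating the expansion of Step 1 a total of $m$ times with respect to $s$ and multiplying by $(-1)^m/m!$ yields
\[
  \frac{(-1)^m}{m!}\,\frac{\partial^m}{\partial s^m}\frac{\Gamma(x+1)\,\Gamma(s)}{\Gamma(x+s+1)}=\sum_{k=0}^\infty\frac{(-1)^k\binom{x}{k}}{(s+k)^{m+1}},
\]
and evaluating at $s=p$ gives precisely \eqref{e:25}. To license the term-by-term differentiation it suffices that the differentiated series $\sum_k(-1)^k\binom{x}{k}(s+k)^{-m-1}$ converge locally uniformly in $s$; this follows from the asymptotic estimate $\binom{x}{k}=O\!\bigl(k^{-\Re(x)-1}\bigr)$ — a consequence of the reflection formula \eqref{Lem-A-eq1} and the standard asymptotics for ratios of gamma functions — combined with the Weierstrass $M$-test, exactly as in the uniform-convergence argument carried out in the proof of Theorem \ref{Thm2.1}.

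\emph{Step 3: extension to the stated range of parameters.} Steps 1--2 prove \eqref{e:25} for $\Re(s)>0$ and $\Re(x)>-1$. For fixed admissible $x$, both sides of \eqref{e:25} are meromorphic in $p$ on $\mathbb{C}\setminus\mathbb{Z}_{\le0}$ — the right-hand side by analyticity of the gamma quotient, the left-hand side because the series converges locally uniformly there — so the identity persists for every $p\in\mathbb{C}\setminus\mathbb{Z}_{\le0}$; an analogous continuation argument in $x$ then covers the remaining admissible values of $x$ (and when $x\in\mathbb{Z}_{\ge0}$ the left-hand side is a finite sum and the matter is elementary). The main, and essentially the only, obstacle is the pair of interchanges of limiting operations in Steps 1 and 2; both rest on the convergence and asymptotic bounds for $\binom{x}{k}$ noted above, and the rest of the argument is routine bookkeeping.
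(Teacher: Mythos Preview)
Your argument is correct and a bit more direct than the paper's. The paper starts on the \emph{series} side: it writes $(p+k)^{-(m+1)}$ as the Laplace integral $\frac{1}{m!}\int_0^\infty u^m e^{-(p+k)u}\,du$, interchanges sum and integral (justified by Lebesgue dominated convergence with the estimate $|\binom{x}{k}|\le Mk^{|x|-1}$, restricted to $|x|<1$), sums the geometric/binomial series to $(1-e^{-u})^x$, substitutes $y=1-e^{-u}$, and only then recognizes the result as $\frac{(-1)^m}{m!}\partial_s^m B(x+1,s)\big|_{s=p}$. You instead start on the \emph{Beta} side, expand $(1-t)^x$ in the Euler integral to obtain the classical partial-fraction expansion $B(s,x+1)=\sum_k(-1)^k\binom{x}{k}/(s+k)$, and differentiate in $s$. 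Both routes lead to the same restricted identity (for $\Re(p)>0$, $\Re(x)>-1$ in your version; $\Re(p)>0$, $|x|<1$ in the paper's) and both finish by analytic continuation. What your approach buys is that the single interchange in Step~1 and the termwise differentiation in Step~2 are very transparent once the asymptotic $\binom{x}{k}=O(k^{-\Re(x)-1})$ is in hand; what the paper's approach buys is that it never needs to justify differentiating an infinite series, since the $m$ derivatives are already packaged into the power $u^m$ inside the integral.
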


\begin{proof}
Using \eqref{GF}, one may find
\begin{equation}\label{Th25-pf-1}
  \frac{1}{t^{m+1}}=\frac{1}{m!}\int_{0}^{\infty}u^{m}e^{-t\,u}\,du \quad \left(m \in \mathbb{Z}_{\geqslant 0},\,\, \Re(t)>0\right).
\end{equation}
Employing \eqref{Th25-pf-1}, one can obtain
\begin{align*}
\sum_{k=0}^{\infty}\frac{(-1)^k}{(p+k)^{m+1}}\binom{x}{k}=\frac{1}{m!}\sum_{k=0}^{\infty}(-1)^k\binom{x}{k}\int_{0}^{\infty}u^{m}e^{-(p+k)u}\,du.
\end{align*}
Here and in the following,  $\Re(p)>0$ is assumed.
Interchanging the order of integration and summation,  one may get
\begin{equation}\label{Th25-pf-2}
\aligned
\sum_{k=0}^{\infty}\frac{(-1)^k}{(p+k)^{m+1}}\binom{x}{k}&=\frac{1}{m!}\int_{0}^{\infty}\,u^{m}e^{-pu}\sum_{k=0}^{\infty}(-1)^k\binom{x}{k}e^{-ku}du\\
&=\frac{1}{m!}\int_{0}^{\infty}u^{m}\,e^{-pu}(1-e^{-u})^x\,du.
\endaligned
\end{equation}
In order to verify the above term-by-term integration, let
  \begin{equation*}
    g_k(u):= u^{m}\,e^{-pu}\, (-1)^k\,\binom{x}{k}\,e^{-ku} \quad \left(u \in \mathbb{R}_{>0},\, k \in \mathbb{Z}_{\geqslant 0}\right).
  \end{equation*}
Note that
  \begin{equation*}
    \left|\binom{x}{k}\right|\leqslant \frac{|x|(|x|+1)\cdots (|x|+k-1)}{k!}= \frac{1}{\Gamma (|x|)}\frac{\Gamma (k+|x|)}{\Gamma (k+1)}.
  \end{equation*}
Holding $x$ fixed and using the asymptotic expansion of ratio of gamma functions (see, e.g., \cite[p. 7]{Sr-Ch-12}),
   we may find
\begin{equation*}
 \left|\binom{x}{k}\right| = O \left(k^{|x|-1}\right) \quad (k \rightarrow \infty).
\end{equation*}
That is, there exist $M \in \mathbb{R}_{>0}$ and $N \in \mathbb{Z}_{\geqslant 2}$ such that
  \begin{equation*}
    \left|\binom{x}{k}\right| \leqslant M\, k^{|x|-1} \,\,\,\text{ for all} \,\,\, k \in \mathbb{Z}_{\geqslant N},
  \end{equation*}
in particular,
\begin{equation*}
    \left|\binom{x}{k}\right| \leqslant M \,\,\,\text{ for all} \,\,\, k \in \mathbb{Z}_{\geqslant N} \,\,\,\text{and} \,\,\, |x|< 1.
  \end{equation*}
Thus we may find that, for $|x|< 1$ and  $\Re(p)>0$,
\begin{equation*}
\aligned
  \sum_{k=N}^{\infty}\, \left|g_k(u) \right| & \leqslant M\,u^m\,e^{-\Re(p)\,u} \sum_{k=N}^{\infty}\, e^{-k u}\\
       & =  M\,u^m\,e^{-\Re(p)\,u}\, \frac{e^{-N u}}{1-e^{-u}}\\
       & =  M\,u^m\,e^{-\Re(p)\,u}\, \frac{e^{-(N-1) u}}{e^u-1}.
\endaligned
\end{equation*}
Since $e^u -1 \geqslant u$ for all $u \in \mathbb{R}_{>0}$, we obtain
\begin{equation*}
   \sum_{k=N}^{\infty}\, \left|g_k(u) \right|  \leqslant M\,u^{m-1}\, e^{-(\Re(p) +N-1)u}
\end{equation*}
and, with the aid of \eqref{Th25-pf-1},
\begin{equation*}
\aligned
   \int_{0}^{\infty}\,\sum_{k=N}^{\infty}\, \left|g_k(u) \right| \,du &\leqslant M\, \int_{0}^{\infty}\, u^{m-1}\, e^{-(\Re(p) +N-1)u}\,du\\
       &= \frac{M\, (m-1)!}{(\Re(p) +N-1)^m} <\infty.
\endaligned
\end{equation*}
Now, by employing the Lebesgue dominated convergence theorem (see, e.g., \cite[p. 53]{Foll}),
\eqref{Th25-pf-2} may be justified.

Making the change of variable $1-e^{-u}=y$, we derive that
\begin{align*}
\sum_{k=0}^{\infty}\frac{(-1)^k}{(p+k)^{m+1}}\binom{x}{k}&=\frac{(-1)^{m}}{m!}\int_{0}^{1}y^x(1-y)^{p-1}\log^{m}(1-y)\,dy\\
&=\frac{(-1)^{m}}{m!}\int_{0}^{1}y^x\frac{\partial^{m}}{\partial s^{m}}(1-y)^{s-1}\big|_{s=p}dy\\
&=\frac{(-1)^{m}}{m!}\frac{\partial^{m}}{\partial s^{m}}\,B(x+1,s)\big|_{s=p},
\end{align*}
where $B(x+1,s)$ is the Beta function in \eqref{beta}.

We thus proved the following identity:
\begin{equation}\label{Th25-pf-3}
\aligned
  & \sum_{k=0}^{\infty}\frac{(-1)^k}{(p+k)^{m+1}}\binom{x}{k}= \frac{(-1)^{m}}{m!}\frac{\partial^{m}}{\partial s^{m}}\frac{\Gamma(x+1)\Gamma(s)}{\Gamma(x+s+1)}\bigg|_{s=p}\\
  & \hskip 20mm  \left(\Re(p)>0,\, \, |x|<1,\,\, m \in \mathbb{N}\right).
\endaligned
\end{equation}
One can observe that both sides of \eqref{Th25-pf-3} are analytic functions of both variables $p$ and $x$
in the wider domains $p \in \mathbb{C} \setminus \mathbb{Z}_{\leqslant 0}$ and  $x \in \mathbb{C} \setminus \mathbb{Z}_{\leqslant -1}$.
Finally, by the principle of analytic continuation, the desired identity \eqref{e:25} can hold true for the given domains.
\end{proof}

\vskip 3mm
\begin{cor}\label{Cor2.6} Let $p \in \mathbb{C} \setminus \mathbb{Z}_{\leqslant 0}$ and $m \in \mathbb{Z}_{\geqslant 0}$. Then
\begin{equation}\label{Cor26-eq}
\aligned
&\sum_{k=0}^{\infty}\frac{\left(H_k-2H_{2k}\right)\binom{2k}{k}}{4^k(p+k)^{m+1}}\\
&\hskip 5mm =\frac{\sqrt{\pi}\,(-1)^{m}}{m!}\frac{d^{m}}{d s^{m}}\frac{\Gamma(s)}{\Gamma(s+\tfrac{1}{2})}\left\{\psi\left(\tfrac{1}{2}\right)-\psi\left(s+\tfrac{1}{2}\right)\right\}\Big|_{s=p}.
\endaligned
\end{equation}
\end{cor}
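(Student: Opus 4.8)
The plan is to obtain \eqref{Cor26-eq} from the identity \eqref{e:25} of Theorem \ref{Theorem2.5} by differentiating it in $x$ and then evaluating at $x=-\tfrac12$. The combinatorial input is the elementary identity
\begin{equation*}
(-1)^k\binom{-\tfrac12}{k}=\frac{1}{4^k}\binom{2k}{k}\qquad(k\in\mathbb{Z}_{\geqslant 0}),
\end{equation*}
which follows from $\binom{-1/2}{k}=\frac{(-1)^k(2k)!}{4^k(k!)^2}$ together with $(2k-1)!!=(2k)!/(2^k k!)$. I would also first note that \eqref{e:25} remains valid for $m=0$, this being the classical partial-fraction expansion $\sum_{k=0}^{\infty}\frac{(-1)^k}{p+k}\binom{x}{k}=\frac{\Gamma(x+1)\Gamma(p)}{\Gamma(x+p+1)}$, obtained by running the proof of Theorem \ref{Theorem2.5} with $\frac{1}{p+k}=\int_0^1 t^{p+k-1}\,dt$ in place of \eqref{Th25-pf-1}; hence \eqref{e:25} is available for every $m\in\mathbb{Z}_{\geqslant 0}$, as is needed here.

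For the left member of \eqref{e:25}, the first step is to justify term-by-term differentiation in $x$ on a small real neighbourhood $[-\tfrac12-\delta,-\tfrac12+\delta]$ of $-\tfrac12$ with $0<\delta<\tfrac12$. This follows from the Weierstrass $M$-test: using the asymptotics $\big|\binom{x}{k}\big|=O\big(k^{-\Re(x)-1}\big)$ and $\psi(x+1)-\psi(x-k+1)=O(\log k)$ as $k\to\infty$ (uniform on that interval), the term-by-term $x$-derivative of the series is dominated there by a constant multiple of $\sum_k k^{-3/2+\delta}\log k$, which converges. Since $\frac{d}{dx}\binom{x}{k}=\binom{x}{k}\{\psi(x+1)-\psi(x-k+1)\}$, differentiating the left side of \eqref{e:25} and putting $x=-\tfrac12$ turns it into $\sum_{k=0}^{\infty}\frac{(-1)^k\binom{-1/2}{k}}{(p+k)^{m+1}}\big\{\psi(\tfrac12)-\psi(\tfrac12-k)\big\}$. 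A short computation with the recurrence \eqref{psi-ext} gives
\begin{equation*}
\psi\big(\tfrac12-k\big)=\psi\big(\tfrac12\big)-\sum_{j=1}^{k}\frac{1}{\tfrac12-j}=\psi\big(\tfrac12\big)+2\sum_{j=1}^{k}\frac{1}{2j-1}=\psi\big(\tfrac12\big)+2H_{2k}-H_k,
\end{equation*}
using $\sum_{j=1}^{k}\frac{1}{2j-1}=H_{2k}-\tfrac12H_k$; hence $\psi(\tfrac12)-\psi(\tfrac12-k)=H_k-2H_{2k}$, and in view of the combinatorial identity above the differentiated left side is exactly $\sum_{k=0}^{\infty}\frac{(H_k-2H_{2k})\binom{2k}{k}}{4^k(p+k)^{m+1}}$.

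For the right member of \eqref{e:25}, I would interchange $\frac{d}{dx}$ with $\frac{\partial^m}{\partial s^m}$, which is legitimate because the mixed partial derivatives of $\frac{\Gamma(x+1)\Gamma(s)}{\Gamma(x+s+1)}$ are analytic, and hence continuous, on $(\mathbb{C}\setminus\mathbb{Z}_{\leqslant -1})\times(\mathbb{C}\setminus\mathbb{Z}_{\leqslant 0})$ --- the same justification already used in the proof of Corollary \ref{Cor2.2}. Then, using
\begin{equation*}
\frac{\partial}{\partial x}\frac{\Gamma(x+1)\Gamma(s)}{\Gamma(x+s+1)}=\frac{\Gamma(x+1)\Gamma(s)}{\Gamma(x+s+1)}\big\{\psi(x+1)-\psi(x+s+1)\big\},
\end{equation*}
setting $x=-\tfrac12$, and invoking $\Gamma(\tfrac12)=\sqrt{\pi}$, the right side of \eqref{e:25} becomes $\frac{(-1)^m\sqrt{\pi}}{m!}\frac{d^m}{ds^m}\frac{\Gamma(s)}{\Gamma(s+1/2)}\big\{\psi(\tfrac12)-\psi(s+\tfrac12)\big\}\big|_{s=p}$, which is precisely the right side of \eqref{Cor26-eq}. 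Equating the two evaluations establishes the corollary. I expect the main obstacle to be the careful justification of these two exchanges of limiting operations --- term-by-term $x$-differentiation of the left series exactly at $x=-\tfrac12$, and the swap of $\frac{d}{dx}$ with $\frac{\partial^m}{\partial s^m}$ on the right --- though both are managed by routine uniform-convergence and analytic-continuity arguments of the type already carried out in the proofs of Theorem \ref{Thm2.1} and Corollary \ref{Cor2.2}.
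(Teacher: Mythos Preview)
Your proposal is correct and follows essentially the same approach as the paper: differentiate \eqref{e:25} with respect to $x$, then set $x=-\tfrac12$, using the identities $(-1)^k\binom{-1/2}{k}=4^{-k}\binom{2k}{k}$ and $\psi(\tfrac12)-\psi(\tfrac12-k)=H_k-2H_{2k}$ (the paper records these via \eqref{Cor36-eq-e} and \eqref{Cor36-eq-d}). Your write-up is in fact more thorough than the paper's sketch, since you explicitly justify the two limit exchanges and also address the $m=0$ case that Theorem~\ref{Theorem2.5} does not formally cover.
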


\begin{proof}
We may get  \eqref{Cor26-eq} by differentiating both sides of \eqref{e:25}, with respect to $x$, and then setting $x=-\frac{1}{2}$
in the resultant identity, and using several formulas for gamma and $\psi$ functions such as
\begin{equation}\label{Cor36-eq-e}
 \Gamma\left(\tfrac{1}{2}-k \right)=\sqrt{\pi}\, (-1)^k\, \frac{2^{2k}\,k!}{(2k)!} \quad \left(k \in \mathbb{Z}_{\geqslant 0}\right),
\end{equation}

\begin{equation}\label{Cor36-eq-d}
  \psi\left(\tfrac{1}{2}\right)-\psi\left(\tfrac{1}{2}-k \right)= H_k - 2\,H_{2k} \quad \left(k \in \mathbb{Z}_{\geqslant 0}\right)
\end{equation}
and
\begin{equation}\label{Cor36-eq-e}
  \psi\left(\tfrac{1}{2}-k\right)=\psi\left(\tfrac{1}{2}+k \right) \quad \left(k \in \mathbb{Z}_{\geqslant 0}\right).
\end{equation}

\end{proof}


\vskip 3mm
\begin{thm}\label{Theorem2.7}
Let $p \in \mathbb{C} \setminus \mathbb{Z}_{\leqslant 0}$,  $m \in \mathbb{Z}_{\geqslant 0}$, and $n \in \mathbb{Z}_{\geqslant 0}$.
Then
\begin{equation}\label{Thm27-eq}
\aligned
\sum_{k=1}^{\infty}\frac{(-1)^n}{k(p+n+k)^{m+1}\binom{n+k}{k}}&=\sum_{k=0}^{n}\frac{(-1)^{k}}{(p+k)^{m+1}}\binom{n}{k}\big(H_n-H_{n-k}\big)\\
&-\frac{(-1)^{m}}{m!}\frac{\partial}{\partial x}\frac{\partial^{m}}{\partial s^{m}}\frac{\Gamma(x+1)\Gamma(s)}{\Gamma(x+s+1)}\bigg|_{\begin{subarray}{l}
       x=n\\
       s=p
      \end{subarray}}.
\endaligned
\end{equation}
\end{thm}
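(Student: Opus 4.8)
The plan is to argue as in the proof of Theorem~\ref{Thm2.1} (the present statement being the $(p+k)$-analogue of Theorem~\ref{Thm25}), with Theorem~\ref{Theorem2.5} now playing the role that Theorem~\ref{Cor2.1} played there. Set
\[
  Q(x):=\sum_{k=0}^{\infty}\frac{(-1)^k}{(p+k)^{m+1}}\binom{x}{k},
\]
so that, by \eqref{e:25} (whose derivation via \eqref{Th25-pf-2} is equally valid when $m=0$, reading $\partial^{0}/\partial s^{0}$ as the identity operator),
\[
  Q(x)=\frac{(-1)^m}{m!}\,\frac{\partial^m}{\partial s^m}\frac{\Gamma(x+1)\Gamma(s)}{\Gamma(x+s+1)}\bigg|_{s=p}\qquad\bigl(x\in\mathbb{C}\setminus\mathbb{Z}_{\leqslant-1}\bigr).
\]
Writing $\binom{x}{k}=\Gamma(x+1)/\bigl(k!\,\Gamma(x-k+1)\bigr)$, each summand is analytic in $x$ on $\mathbb{C}\setminus\mathbb{Z}_{\leqslant-1}$, and --- by estimates entirely parallel to those in the proof of Theorem~\ref{Thm2.1}, the extra decaying factor $(p+k)^{-(m+1)}$ only making matters easier --- the series $Q(x)$ may be differentiated with respect to $x$ term by term in a neighbourhood of any $n\in\mathbb{Z}_{\geqslant0}$. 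This gives
\[
  Q'(x)=\sum_{k=1}^{\infty}\frac{(-1)^k}{(p+k)^{m+1}}\binom{x}{k}\{\psi(x+1)-\psi(x-k+1)\},
\]
the $k=0$ term dropping out.

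Next I would set $x=n$ and split the series at $k=n$. For $1\leqslant k\leqslant n$, \eqref{e:1} turns the $k$-th summand into $(-1)^k(p+k)^{-(m+1)}\binom{n}{k}(H_n-H_{n-k})$, the $\gamma$'s cancelling. For $k\geqslant n+1$ the term $\binom{x}{k}\psi(x+1)$ vanishes at $x=n$ since $\binom{n}{k}=0$, while the remaining piece, rewritten as $\binom{x}{k}\psi(x-k+1)=\Gamma(x+1)\,\psi(x-k+1)/\bigl(k!\,\Gamma(x-k+1)\bigr)$, is evaluated in the limit $x\to n$: the quotient $\psi(x-k+1)/\Gamma(x-k+1)$ then approaches the value of $\psi(z)/\Gamma(z)$ at $z=n-k+1=-(k-n-1)$, which by \eqref{LemB-eq1} equals $(-1)^{k-n}(k-n-1)!$. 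Collecting the sign $(-1)^k(-1)^{k-n}=(-1)^n$, re-indexing ($k=n+1+\ell$, then relabelling $\ell\mapsto k-1$), and using $n!\,(k-1)!/(n+k)!=1/\bigl(k\binom{n+k}{k}\bigr)$, the tail $\sum_{k\geqslant n+1}$ collapses to $-(-1)^n\sum_{k=1}^{\infty}1/\bigl(k(p+n+k)^{m+1}\binom{n+k}{k}\bigr)$. This yields
\[
  Q'(n)=\sum_{k=1}^{n}\frac{(-1)^k}{(p+k)^{m+1}}\binom{n}{k}(H_n-H_{n-k})-(-1)^n\sum_{k=1}^{\infty}\frac{1}{k(p+n+k)^{m+1}\binom{n+k}{k}}.
\]

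Finally I would differentiate the closed form of $Q$ to get $Q'(n)=\frac{(-1)^m}{m!}\frac{\partial}{\partial x}\frac{\partial^m}{\partial s^m}\frac{\Gamma(x+1)\Gamma(s)}{\Gamma(x+s+1)}\big|_{x=n,\,s=p}$, substitute this into the last display, and solve for $\sum_{k=1}^{\infty}(-1)^n/\bigl(k(p+n+k)^{m+1}\binom{n+k}{k}\bigr)$; since the $k=0$ summand of $\sum_k\binom{n}{k}(H_n-H_{n-k})$ is $H_n-H_n=0$, the finite sum may be started at $k=0$, and the identity obtained is precisely \eqref{Thm27-eq}. The genuinely delicate steps --- just as in the proof of Theorem~\ref{Thm2.1} --- are the justification of the term-by-term differentiation and the careful passage to the limit $x\to n$ in the $k\geqslant n+1$ terms, an indeterminate $0\cdot\infty$ form that Lemma~\ref{LemB} is precisely designed to handle; everything else is sign bookkeeping and re-indexing.
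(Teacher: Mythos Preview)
Your proof is correct and follows essentially the same approach as the paper's: define $Q(x)$ via Theorem~\ref{Theorem2.5}, differentiate term by term, set $x=n$, split the sum at $k=n$, and use \eqref{LemB-eq1} to evaluate the indeterminate tail, then equate with the derivative of the closed form. The only cosmetic difference is that the paper first separates $\psi(x+1)$ and $\psi(x-k+1)$ into two sums before handling the tail, whereas you keep the difference $\psi(x+1)-\psi(x-k+1)$ intact and observe that the $\psi(x+1)$ piece vanishes for $k\geqslant n+1$; the computations are otherwise identical.
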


\begin{proof}
Denote both sides of \eqref{e:25} by $Q(x)$. Then, differentiating the left member of  \eqref{e:25}, with respect to $x$, gives
\begin{equation*}
Q^\prime(x)=\sum_{k=0}^{\infty}\frac{(-1)^{k}}{(p+k)^{m+1}}\binom{x}{k}(\psi(x+1)-\psi(x-k+1)),
\end{equation*}
which, upon setting $x=n$,  yields
\begin{align}\label{e:26}
Q^\prime(n)&=\psi(n+1)\sum_{k=0}^{n}\frac{(-1)^{k}}{(p+k)^{m+1}}\binom{n}{k}-\sum_{k=0}^{\infty}\frac{(-1)^{k}}{(p+k)^{m+1}}\binom{n}{k}\psi(n-k+1).
\end{align}
Split the second sum in \eqref{e:26} into two parts as follows:
\begin{align*}
\sum_{k=0}^{\infty}\frac{(-1)^{k}}{(p+k)^{m+1}}\binom{n}{k}\psi(n-k+1)&=\sum_{k=0}^{n}\frac{(-1)^{k}}{(p+k)^{m+1}}\binom{n}{k}(H_{n-k}-\gamma)\notag\\
&+\sum_{k=n+1}^{\infty}\frac{(-1)^{k}}{(p+k)^{m+1}}\binom{n}{k}\psi(n-k+1).
\end{align*}
Letting  $k-n-1=k^\prime$ and then dropping the prime on $k$, and using \eqref{LemB-eq1}, we get
\begin{align*}
&\sum_{k=n+1}^{\infty}\frac{(-1)^{k}}{(p+k)^{m+1}}\binom{n}{k}\psi(n-k+1)\notag\\
&\hskip 5mm=(-1)^{n+1}\,\sum_{k=0}^{\infty}\frac{(-1)^{k}}{(p+n+k+1)^{m+1}}\frac{n!}{(n+k+1)!}\frac{\psi(-k)}{\Gamma(-k)}\\
&\hskip 5mm=\sum_{k=0}^{\infty}\frac{(-1)^{n}}{(p+n+k+1)^{m+1}(n+k+1)\binom{n+k}{k}}.
\end{align*}
Thus,
\begin{equation}\label{e:27}
\aligned
&\sum_{k=0}^{\infty}\frac{(-1)^{k}}{(p+k)^{m+1}}\binom{n}{k}\psi(n-k+1)
     =\sum_{k=0}^{n}\frac{(-1)^{k}}{(p+k)^{m+1}}\binom{n}{k}(H_{n-k}-\gamma)\\
&\hskip 45mm +\sum_{k=0}^{\infty}\frac{(-1)^{n}}{(p+n+k+1)^{m+1}(n+k+1)\binom{n+k}{k}}.
\endaligned
\end{equation}
Finally, substituting \eqref{e:27} for the second sum \eqref{e:26}, and differentiating the right member of  \eqref{e:25}, with respect to $x$,
   setting $x=n$, and matching the two expressions, we may obtain the desired result \eqref{Thm27-eq}.
\end{proof}

\vskip 3mm
\begin{cor}\label{cor2.8}
Let $p \in \mathbb{C} \setminus \mathbb{Z}_{\leqslant 0}$ and  $m \in \mathbb{Z}_{\geqslant 0}$. Then
\begin{align}\label{Cr28-eq}
\sum_{k=1}^{\infty}\frac{1}{k(p+k)^{m+1}}=\frac{\gamma}{p^{m+1}}+\frac{1}{p^{m+1}}\sum_{j=0}^{m}\frac{(-1)^{j}p^j}{j!}\psi^{(j)}(p+1).
\end{align}
\end{cor}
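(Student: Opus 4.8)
The plan is to obtain \eqref{Cr28-eq} as the case $n=0$ of Theorem \ref{Theorem2.7} followed by an explicit differentiation. When $n=0$ the left-hand side of \eqref{Thm27-eq} reduces to $\sum_{k=1}^{\infty}\frac{1}{k(p+k)^{m+1}}$ because $\binom{k}{k}=1$, and the finite sum on the right collapses to its single $k=0$ term, namely $\binom{0}{0}(H_0-H_0)=0$. Hence \eqref{Thm27-eq} becomes
\begin{equation*}
\sum_{k=1}^{\infty}\frac{1}{k(p+k)^{m+1}}
=-\frac{(-1)^{m}}{m!}\,\frac{\partial}{\partial x}\frac{\partial^{m}}{\partial s^{m}}\,\frac{\Gamma(x+1)\Gamma(s)}{\Gamma(x+s+1)}\bigg|_{\begin{subarray}{l}x=0\\ s=p\end{subarray}},
\end{equation*}
so the whole task is to evaluate this mixed partial derivative.

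First I would interchange the two differentiations, which is legitimate exactly as in the proof of Corollary \ref{Cor2.2}: with $f(x,s)=\Gamma(x+1)\Gamma(s)/\Gamma(x+s+1)$, both $\frac{\partial}{\partial x}\frac{\partial^m}{\partial s^m}f$ and $\frac{\partial^m}{\partial s^m}\frac{\partial}{\partial x}f$ are analytic, hence continuous, on $x\in\mathbb{C}\setminus\mathbb{Z}_{\leqslant-1}$, $s\in\mathbb{C}\setminus\mathbb{Z}_{\leqslant 0}$. Logarithmic differentiation gives $\frac{\partial f}{\partial x}=f(x,s)\{\psi(x+1)-\psi(x+s+1)\}$; since $f(0,s)=\Gamma(s)/\Gamma(s+1)=1/s$ and $\psi(1)=-\gamma$, this yields $\frac{\partial f}{\partial x}\big|_{x=0}=-\dfrac{\gamma+\psi(s+1)}{s}$.

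Next I would apply the Leibniz rule to the $m$-th $s$-derivative of $(\gamma+\psi(s+1))/s$. Writing $u(s)=\gamma+\psi(s+1)$, so that $u^{(j)}(s)=\psi^{(j)}(s+1)$ for $j\in\mathbb{N}$, and $v(s)=1/s$, so that $v^{(k)}(s)=(-1)^{k}k!\,s^{-k-1}$, the identity $\binom{m}{j}(m-j)!=m!/j!$ gives
\begin{equation*}
\frac{d^{m}}{ds^{m}}\,\frac{\gamma+\psi(s+1)}{s}
=\frac{(-1)^{m}\,m!}{s^{m+1}}\left\{\gamma+\sum_{j=0}^{m}\frac{(-1)^{j}s^{j}}{j!}\,\psi^{(j)}(s+1)\right\},
\end{equation*}
where the term $j=0$ inside the braces contributes the $\psi(s+1)$ from $u$ itself. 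Substituting back (picking up the extra minus sign from $\frac{\partial f}{\partial x}\big|_{x=0}$), evaluating at $s=p$, and multiplying by $-(-1)^{m}/m!$ cancels the factors $(-1)^{m}$ and $m!$ and produces precisely $\frac{\gamma}{p^{m+1}}+\frac{1}{p^{m+1}}\sum_{j=0}^{m}\frac{(-1)^{j}p^{j}}{j!}\psi^{(j)}(p+1)$, which is \eqref{Cr28-eq}. The only real obstacle is careful sign- and factor-bookkeeping through the Leibniz expansion, in particular the reduction $\binom{m}{j}(m-j)!=m!/j!$; the analytic justifications (term-by-term differentiation of $Q(x)$ and the mixed-partial interchange) are already carried out verbatim in the earlier proofs and can simply be cited.
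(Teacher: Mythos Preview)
Your proof is correct and follows exactly the paper's approach: the paper's entire proof is the single sentence ``Setting $n=0$ in \eqref{Thm27-eq} can yield \eqref{Cr28-eq},'' and you carry out precisely this specialization, supplying the explicit Leibniz-rule computation that the paper leaves to the reader.
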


\begin{proof}
 Setting $n=0$ in \eqref{Thm27-eq} can yield \eqref{Cr28-eq}.
\end{proof}

\vskip 3mm
\begin{thm}\label{Theorem2.9}
Let $p \in \mathbb{C} \setminus \mathbb{Z}_{\leqslant 0}$ and  $m,\,n \in \mathbb{Z}_{\geqslant 0}$. Then
\begin{equation}\label{Th29-eq}
 \aligned
&\sum_{k=1}^{\infty}\frac{(-1)^nH_{k-1}}{k(p+n+k)^{m+1}\binom{n+k}{k}}\\
&\hskip 3mm =\frac{1}{2}\sum_{k=0}^{n}\frac{(-1)^k}{(p+k)^{m+1}}\binom{n}{k}\big\{H_n^2+H_n^{(2)}-H_{n-k}^2-H_{n-k}^{(2)}\big\}\\
&\hskip 6mm +\frac{(-1)^{m}}{2\, m!}\frac{\partial^{m}}{\partial s^{m}}\bigg[\bigg(\frac{\partial^2}{\partial x^2}-2H_n\frac{\partial}{\partial x}\bigg)\frac{\Gamma(x+1)\Gamma(s)}{\Gamma(x+s+1)}\bigg]_{\begin{subarray}{l}
       x=n\\
       s=p
      \end{subarray}}.
 \endaligned
\end{equation}
\end{thm}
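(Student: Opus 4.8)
The plan is to follow the scheme of Theorems \ref{Thm2.1} and \ref{Theorem2.7}, now differentiating the generating identity \eqref{e:25} \emph{twice} in $x$. Write $Q(x)$ for both sides of \eqref{e:25}. Since $\binom{x}{k}=\frac{x(x-1)\cdots(x-k+1)}{k!}$, the series $Q(x)=\sum_{k\ge 0}\frac{(-1)^{k}}{(p+k)^{m+1}\,k!}\,x(x-1)\cdots(x-k+1)$ has the same structure as the series $P(x)$ in \eqref{Thm2.1-eq-pf1}, the factor $(p+k)^{-(m+1)}$ only improving convergence, so a term-by-term differentiation argument entirely analogous to the one in the proof of Theorem \ref{Thm2.1} shows that $Q$ may be differentiated twice term by term on $\mathbb{C}\setminus\mathbb{Z}_{\leqslant -1}$, giving
\[
Q''(x)=\sum_{k=0}^{\infty}\frac{(-1)^{k}}{(p+k)^{m+1}}\binom{x}{k}\Big[\{\psi(x+1)-\psi(x-k+1)\}^{2}+\psi'(x+1)-\psi'(x-k+1)\Big].
\]
Expanding the bracket, putting $x=n$, and using \eqref{e:1} and \eqref{e:2} to write $\psi(n+1)=H_{n}-\gamma$ and $\psi'(n+1)=\zeta(2)-H_{n}^{(2)}$, I obtain $Q''(n)$ as a combination, with coefficients built from $\psi(n+1)$ and $\psi'(n+1)$, of the three series
\[
\sum_{k=0}^{n}\frac{(-1)^{k}\binom{n}{k}}{(p+k)^{m+1}},\qquad
\sum_{k=0}^{\infty}\frac{(-1)^{k}\binom{n}{k}}{(p+k)^{m+1}}\psi(n-k+1),\qquad
\sum_{k=0}^{\infty}\frac{(-1)^{k}\binom{n}{k}}{(p+k)^{m+1}}\big\{\psi^{2}(n-k+1)-\psi'(n-k+1)\big\}.
\]

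The first series is already finite, and the middle one was handled in the proof of Theorem \ref{Theorem2.7}: identity \eqref{e:27} rewrites it as $\sum_{k=0}^{n}\frac{(-1)^{k}\binom{n}{k}}{(p+k)^{m+1}}(H_{n-k}-\gamma)$ plus the variant sum $V:=\sum_{k=1}^{\infty}\frac{(-1)^{n}}{k(p+n+k)^{m+1}\binom{n+k}{k}}$ appearing on the right of \eqref{Thm27-eq}. The genuinely new step is the third series: split $\sum_{k=0}^{\infty}=\sum_{k=0}^{n}+\sum_{k=n+1}^{\infty}$. For $0\le k\le n$ the term equals $(H_{n-k}-\gamma)^{2}-\zeta(2)+H_{n-k}^{(2)}$ by \eqref{e:1}, \eqref{e:2}. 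In the tail I set $k=k'+n+1$, so that $\binom{n}{k}\{\psi^{2}(n-k+1)-\psi'(n-k+1)\}=\frac{n!}{(n+k'+1)!}\cdot\frac{\psi^{2}(-k')-\psi'(-k')}{\Gamma(-k')}$, and apply Lemma \ref{LemB}, equation \eqref{LemB-eq2}, which gives $\lim_{z\to-k'}\frac{\psi^{2}(z)-\psi'(z)}{\Gamma(z)}=2(-1)^{k'-1}k'!\,\psi(k'+1)$. Collecting the sign $(-1)^{k}=(-1)^{k'+n+1}$, writing $\psi(k'+1)=H_{k'}-\gamma$, re-indexing $k=k'+1$, and using the elementary identity $k\binom{n+k}{k}=(n+k)\binom{n+k-1}{k-1}$, the tail becomes $2\sum_{k=1}^{\infty}\frac{(-1)^{n}(H_{k-1}-\gamma)}{k(p+n+k)^{m+1}\binom{n+k}{k}}=2U-2\gamma V$, where $U$ denotes the series on the left of \eqref{Th29-eq}.

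It remains to assemble. Substituting these evaluations into $Q''(n)$, and recording from \eqref{Thm27-eq} that $Q'(n)=H_{n}\sum_{k=0}^{n}\frac{(-1)^{k}\binom{n}{k}}{(p+k)^{m+1}}-\sum_{k=0}^{n}\frac{(-1)^{k}\binom{n}{k}}{(p+k)^{m+1}}H_{n-k}-V$, one forms the combination $Q''(n)-2H_{n}Q'(n)$; in it every occurrence of $V$ cancels and, once \eqref{e:1}--\eqref{e:2} are used on the finite sums, so do all the stray $\gamma$- and $\zeta(2)$-terms, leaving
\[
2U=Q''(n)-2H_{n}Q'(n)+\sum_{k=0}^{n}\frac{(-1)^{k}\binom{n}{k}}{(p+k)^{m+1}}\big(H_{n}^{2}+H_{n}^{(2)}-H_{n-k}^{2}-H_{n-k}^{(2)}\big).
\]
Finally, since $Q(x)=\frac{(-1)^{m}}{m!}\frac{\partial^{m}}{\partial s^{m}}\frac{\Gamma(x+1)\Gamma(s)}{\Gamma(x+s+1)}\big|_{s=p}$ and the $x$- and $s$-differentiations commute (the mixed partials being analytic, hence continuous, on the relevant domains, as noted in the proof of Corollary \ref{Cor2.2}), we have $Q''(n)-2H_{n}Q'(n)=\frac{(-1)^{m}}{m!}\frac{\partial^{m}}{\partial s^{m}}\big[\big(\frac{\partial^{2}}{\partial x^{2}}-2H_{n}\frac{\partial}{\partial x}\big)\frac{\Gamma(x+1)\Gamma(s)}{\Gamma(x+s+1)}\big]_{x=n,\,s=p}$; dividing by $2$ yields \eqref{Th29-eq}. (For $m=0$ one uses \eqref{e:25} in its Beta-function form $\sum_{k\ge 0}\frac{(-1)^{k}}{p+k}\binom{x}{k}=B(x+1,p)$, and the argument is unchanged.) I expect the main obstacle to be bookkeeping rather than conceptual: organizing the numerous $\gamma$- and $\zeta(2)$-dependent pieces so that they cancel exactly, together with the care needed to justify term-by-term double differentiation of $Q$ near $x=n$, where the summands with $k>n$ vanish individually but their $x$-derivatives do not.
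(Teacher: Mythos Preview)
Your proposal is correct and follows essentially the same approach that the paper intends: the paper's proof reads in full ``A similar technique of the proof of Theorem \ref{Thm2.1} may be used. The details are omitted,'' and what you have written is precisely that technique transported from $P(x)$ to $Q(x)$, using \eqref{e:27} for the first-derivative series, Lemma \ref{LemB} \eqref{LemB-eq2} for the tail of the second-derivative series, and then forming $Q''(n)-2H_nQ'(n)$ to force the cancellation of the auxiliary sum $V$ and all $\gamma$- and $\zeta(2)$-terms, exactly paralleling \eqref{Th21-FR}. Your bookkeeping checks out, including the re-indexing $k\mapsto k-1$ that produces $H_{k-1}$ and the extra factor $1/k$ in the variant sum.
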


\begin{proof} A similar technique of the proof of Theorem \ref{Thm2.1} may be used.
             The details are omitted.
\end{proof}

\vskip 3mm
\begin{cor}\label{Cor2.10}
Let $p \in \mathbb{C} \setminus \mathbb{Z}_{\leqslant 0}$ and  $m \in \mathbb{Z}_{\geqslant 0}$. Then
\begin{equation}\label{Cr2-10-eq}
  \aligned
  & \sum_{k=1}^{\infty}\frac{H_{k-1}}{k(p+k)^{m+1}}
    = \frac{1}{2} \Big\{ (\gamma +\psi (p+1))^2 +\zeta (2)- \psi'(p+1)\Big\}\\
  &\hskip 15mm + \frac{1}{2}\sum_{\ell=1}^{m}\,\frac{(-1)^\ell}{\ell ! \,p^{m-\ell +1}}
    \Big\{2 \gamma\, \psi^{(\ell)}(p+1) -\psi^{(\ell+1)}(p+1) \\
   &\hskip 37mm  + \sum_{j=0}^{\ell}\binom{\ell}{j} \,\psi^{(j)}(p+1)\psi^{(\ell-j)}(p+1) \Big\}.
  \endaligned
\end{equation}
\end{cor}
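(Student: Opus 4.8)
The plan is to derive \eqref{Cr2-10-eq} as the case $n=0$ of Theorem \ref{Theorem2.9}, and then to evaluate the mixed partial derivative on the right of \eqref{Th29-eq} explicitly; this is the same mechanism by which Corollary \ref{cor2.8} was read off from Theorem \ref{Theorem2.7}, only now carrying a second derivative in $x$.

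First I would set $n=0$ in \eqref{Th29-eq}. On the left, $\binom{0+k}{k}=1$, so the series becomes $\sum_{k=1}^{\infty}H_{k-1}/\big(k(p+k)^{m+1}\big)$ (which converges for every $m\in\mathbb{Z}_{\geqslant0}$, its terms being $O\!\left((\log k)/k^{m+2}\right)$). On the right, the finite sum $\sum_{k=0}^{n}$ collapses to its $k=0$ term, whose bracket $H_0^2+H_0^{(2)}-H_0^2-H_0^{(2)}$ is zero, and since $H_0=0$ the operator $\partial^2/\partial x^2-2H_n\,\partial/\partial x$ reduces to $\partial^2/\partial x^2$. Hence
\[
\sum_{k=1}^{\infty}\frac{H_{k-1}}{k(p+k)^{m+1}}
=\frac{(-1)^{m}}{2\,m!}\,\frac{\partial^{m}}{\partial s^{m}}\!\left[\frac{\partial^{2}}{\partial x^{2}}\,\frac{\Gamma(x+1)\Gamma(s)}{\Gamma(x+s+1)}\bigg|_{x=0}\right]_{s=p}.
\]

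Second, I would compute the inner $x$-derivative by logarithmic differentiation. Writing $G(x,s):=\Gamma(x+1)\Gamma(s)/\Gamma(x+s+1)$, one has $\partial_x G=G\big(\psi(x+1)-\psi(x+s+1)\big)$ and therefore $\partial_x^{2}G=G\big[\big(\psi(x+1)-\psi(x+s+1)\big)^{2}+\psi'(x+1)-\psi'(x+s+1)\big]$. Putting $x=0$ and using $G(0,s)=\Gamma(s)/\Gamma(s+1)=1/s$, $\psi(1)=-\gamma$, and $\psi'(1)=\zeta(2)$ yields
\[
\frac{\partial^{2}G}{\partial x^{2}}\bigg|_{x=0}=\frac1s\Big[\big(\gamma+\psi(s+1)\big)^{2}+\zeta(2)-\psi'(s+1)\Big].
\]

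Third, I would apply $\tfrac{(-1)^{m}}{2\,m!}\,\partial_s^{m}(\cdot)\big|_{s=p}$ to this product, using the Leibniz rule with $\tfrac{d^{j}}{ds^{j}}s^{-1}=(-1)^{j}j!\,s^{-j-1}$ (so that $\binom{m}{j}(m-j)!=m!/j!$), an inner Leibniz expansion $\tfrac{d^{\ell}}{ds^{\ell}}\big(\gamma+\psi(s+1)\big)^{2}=2\gamma\,\psi^{(\ell)}(s+1)+\sum_{j=0}^{\ell}\binom{\ell}{j}\psi^{(j)}(s+1)\,\psi^{(\ell-j)}(s+1)$ for $\ell\geqslant1$, the identity $\tfrac{d^{\ell}}{ds^{\ell}}\psi'(s+1)=\psi^{(\ell+1)}(s+1)$, and the observation that the constant $\zeta(2)$ contributes only through the copy untouched by any $s$-derivative. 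Regrouping the resulting double sum according to the number $\ell$ of $s$-derivatives that fall on the bracket — the $\ell=0$ part giving the first term of \eqref{Cr2-10-eq} and the $\ell\geqslant1$ parts assembling into the displayed sum — gives the assertion. The only real difficulty is bookkeeping: keeping the two nested Leibniz expansions (the outer one over the factorization $s^{-1}\cdot[\cdots]$, the inner one over the square) correctly aligned, and treating the $\zeta(2)$ term separately from the polygamma terms. No fresh analytic input is required, since termwise $x$-differentiation of the underlying series was already justified in the proof of Theorem \ref{Thm2.1}.
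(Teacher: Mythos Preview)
Your approach is exactly the paper's: specialize Theorem~\ref{Theorem2.9} to $n=0$, compute $\partial_x^2\big(\Gamma(x+1)\Gamma(s)/\Gamma(x+s+1)\big)\big|_{x=0}=s^{-1}\big[(\gamma+\psi(s+1))^2+\zeta(2)-\psi'(s+1)\big]$, and finish with a Leibniz expansion in $s$; the paper records the same intermediate $h(s)/s$ and then simply writes ``one may get the desired identity.'' Your outline supplies the bookkeeping the paper omits, so there is no methodological difference.

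One caution worth flagging: if you actually execute the outer Leibniz step, the $\ell=0$ contribution is
\[
\frac{(-1)^m}{2\,m!}\binom{m}{0}h(p)\,\frac{d^m}{ds^m}\,s^{-1}\Big|_{s=p}
=\frac{1}{2\,p^{\,m+1}}\Big\{(\gamma+\psi(p+1))^2+\zeta(2)-\psi'(p+1)\Big\},
\]
so the first displayed term in \eqref{Cr2-10-eq} should carry the factor $1/p^{\,m+1}$, matching the pattern of the $\ell\geqslant1$ terms. (A quick check at $m=0$, $p=2$ gives $\sum_{k\geqslant1}H_{k-1}/(k(k+2))=7/8$, while the bracket alone is $7/4$.) Your argument is sound and reproduces the paper's derivation; it simply uncovers a misprint in the stated formula rather than a flaw in the method.
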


\begin{proof}
Setting $n=0$ in \eqref{Th29-eq} offers
\begin{equation*}
\sum_{k=1}^{\infty}\frac{H_{k-1}}{k(p+k)^{m+1}}=\frac{(-1)^{m}}{2\,m!}
\left[\frac{\partial^{m}}{\partial s^{m}} \bigg\{ \frac{\partial^2}{\partial x^2}\frac{\Gamma(x+1)\Gamma(s)}{\Gamma(x+s+1)}\bigg\}\bigg|_{x=0}
  \right]\bigg|_{s=p}.
\end{equation*}
Also we may have
 \begin{equation*}
\Gamma(s)\,\frac{\partial^2}{\partial x^2}\frac{\Gamma (x+1)}{\Gamma (x+s+1)}\bigg|_{x=0} = \frac{h(s)}{s},
 \end{equation*}
where
  \begin{equation*}
    h(s):= \gamma^2 + \zeta (2) +2 \gamma\, \psi (s+1) + (\psi (s+1))^2 -\psi' (s+1).
  \end{equation*}
Then one may get the desired identity \eqref{Cr2-10-eq}.
\end{proof}

\vskip 3mm
\begin{thm}\label{Theorem2.11}
Let $p \in \mathbb{C} \setminus \mathbb{Z}_{\leqslant 0}$,   $m \in \mathbb{N}$ and $n \in \mathbb{Z}_{\geqslant 0}$. Then
\begin{equation}\label{Th211-eq}
\aligned
&\sum_{k=1}^{\infty}\frac{H_{k-1}^2-H_{k-1}^{(2)}}{k(p+n+k)^{m}\binom{n+k}{k}}
  =\frac{(-1)^{n}}{3}\sum_{k=0}^{n}\frac{(-1)^{k}}{(p+k)^m}\binom{n}{k}\\
&\times\big\{H_n^3+2H_n^{(3)} +3H_nH_n^{(2)}-H_{n-k}^3-2H_{n-k}^{(3)}-3H_{n-k}H_{n-k}^{(2)}\big\}\\
&\hskip 5mm +\frac{(-1)^{m+n}\big(H_n^2+H_n^{(2)}\big)}{(m-1)!}\frac{\partial}{\partial x}\bigg[\frac{\partial^m}{\partial z^m}\frac{\Gamma(x+1)\Gamma(z)}{\Gamma(x+z+1)}\bigg]_{\begin{subarray}{l}
       x=n\\
       z=p
      \end{subarray}}\\
&\hskip 5mm-\frac{(-1)^{m+n}H_n}{(m-1)!}\frac{\partial^2}{\partial x^2}\bigg[\frac{\partial^m}{\partial z^m}\frac{\Gamma(x+1)\Gamma(z)}{\Gamma(x+z+1)}\bigg]_{\begin{subarray}{l}
       x=n\\
       z=p
      \end{subarray}}\\
&\hskip 5mm+\frac{(-1)^{m+n}}{3(m-1)!}\frac{\partial^3}{\partial x^3}\bigg[\frac{\partial^m}{\partial z^m}\frac{\Gamma(x+1)\Gamma(z)}{\Gamma(x+z+1)}\bigg]_{\begin{subarray}{l}
       x=n\\
       z=p
      \end{subarray}}.
\endaligned
\end{equation}
\end{thm}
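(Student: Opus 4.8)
The plan is to mimic, for the family governed by \eqref{e:25}, the argument of Theorem~\ref{Theorem2.3}: differentiate three times in $x$, specialise to $x=n$, split the resulting series, and feed in Lemma~\ref{LemB}. This is the $Q(x)$-analogue of what produced Theorems~\ref{Theorem2.7} and~\ref{Theorem2.9} from \eqref{e:25}, carried one more derivative. Concretely, I would let $Q(x)$ denote both members of \eqref{e:25}, applied with its parameter $m$ replaced by $m-1$ so that the exponent reads $(p+k)^{m}$; that is,
\[
Q(x):=\sum_{k=0}^{\infty}\frac{(-1)^k}{(p+k)^{m}}\binom{x}{k}
 =\frac{(-1)^{m-1}}{(m-1)!}\,\frac{\partial^{m-1}}{\partial s^{m-1}}\,\frac{\Gamma(x+1)\Gamma(s)}{\Gamma(x+s+1)}\bigg|_{s=p}
 \qquad\bigl(\Re(p)>0\bigr),
\]
the full range $p\in\mathbb{C}\setminus\mathbb{Z}_{\leqslant0}$ to be recovered at the end by analytic continuation, exactly as in Theorem~\ref{Theorem2.5}. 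Writing $\binom{x}{k}=\Gamma(x+1)\big/\bigl(k!\,\Gamma(x-k+1)\bigr)$ via \eqref{GBC}, each term of $Q$ is $\Gamma(x+1)$ times an entire function, while the weight $(p+k)^{-m}$ already forces absolute convergence; so, exactly along the Weierstrass $M$-test lines written out in full in the proof of Theorem~\ref{Thm2.1}, the series for $Q,Q',Q'',Q'''$ all converge locally uniformly in $x$ on $\mathbb{C}\setminus\mathbb{Z}_{\leqslant-1}$, and $Q$ may be differentiated three times term-by-term there. On the right-hand side, $\partial_x^3\partial_z^{m-1}\bigl(\Gamma(x+1)\Gamma(z)/\Gamma(x+z+1)\bigr)$ is analytic in both variables, so the $x$- and $z$-differentiations commute, as in the proof of Corollary~\ref{Cor2.2}.

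Next I would compute $Q'''(x)$ term-by-term. With $\phi_k(x):=\psi(x+1)-\psi(x-k+1)$ one has $\frac{d}{dx}\binom{x}{k}=\binom{x}{k}\phi_k$, hence $\frac{d^3}{dx^3}\binom{x}{k}=\binom{x}{k}\bigl(\phi_k^3+3\phi_k\phi_k'+\phi_k''\bigr)$, where $\phi_k'=\psi'(x+1)-\psi'(x-k+1)$ and $\phi_k''=\psi''(x+1)-\psi''(x-k+1)$. Putting $x=n$, expanding the cube and the products, and using \eqref{e:1} and \eqref{e:2} to replace $\psi(n+1),\psi'(n+1),\psi''(n+1)$ by $H_n-\gamma,\ \zeta(2)-H_n^{(2)},\ 2(H_n^{(3)}-\zeta(3))$, one writes $Q'''(n)$ as a $\gamma$-laden polynomial in $H_n,H_n^{(2)},H_n^{(3)}$ multiplying the three auxiliary series
\[
\sum_{k=0}^{\infty}\frac{(-1)^k}{(p+k)^{m}}\binom{n}{k}\Bigl\{\,1,\quad \psi(n-k+1),\quad \psi^2(n-k+1)-\psi'(n-k+1)\,\Bigr\},
\]
plus the single genuinely new series
\[
-\sum_{k=0}^{\infty}\frac{(-1)^k}{(p+k)^{m}}\binom{n}{k}\bigl\{\psi^3(n-k+1)-3\psi(n-k+1)\psi'(n-k+1)+\psi''(n-k+1)\bigr\}.
\]
The first three auxiliary series are precisely the ones evaluated in proving Theorems~\ref{Theorem2.5}, \ref{Theorem2.7} and~\ref{Theorem2.9}, so their closed forms — and the identities standing behind them — may be inserted directly.

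For the new series I would split $\sum_{k=0}^{\infty}=\sum_{k=0}^{n}+\sum_{k=n+1}^{\infty}$. On $0\leqslant k\leqslant n$ the argument $n-k+1$ is a positive integer and \eqref{e:1}, \eqref{e:2} turn that block into the finite double sum displayed on the first two lines of \eqref{Th211-eq}. On $k\geqslant n+1$, substituting $k\mapsto n+1+k$, using $\binom{n}{n+1+k}=\frac{n!}{(n+k+1)!}\cdot\frac1{\Gamma(-k)}$, and applying \eqref{LemB-eq1}, \eqref{LemB-eq2}, \eqref{LemB-eq3} to the limits as $z\to-k$ of $\psi(z)/\Gamma(z)$, of $\bigl(\psi^2(z)-\psi'(z)\bigr)/\Gamma(z)$ and of $\bigl(\psi^3(z)-3\psi(z)\psi'(z)+\psi''(z)\bigr)/\Gamma(z)$, then tidying the signs and the factor $\frac{n!\,k!}{(n+k+1)!}=\frac1{(n+k+1)\binom{n+k}{k}}$, and finally shifting the summation index $k\mapsto k-1$ (which converts $H_k,H_k^{(2)}$ into $H_{k-1},H_{k-1}^{(2)}$), this tail becomes a fixed multiple of the target series $\sum_{k=1}^{\infty}\frac{H_{k-1}^2-H_{k-1}^{(2)}}{k(p+n+k)^{m}\binom{n+k}{k}}$ together with lower-weight pieces already furnished by Theorems~\ref{Theorem2.7} and~\ref{Theorem2.9}. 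Equating this evaluation of $Q'''(n)$ with $\frac{(-1)^{m-1}}{(m-1)!}\,\partial_x^3\partial_z^{m-1}\bigl(\Gamma(x+1)\Gamma(z)/\Gamma(x+z+1)\bigr)\big|_{x=n,\,z=p}$ and solving for the target series yields \eqref{Th211-eq}.

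I expect the only delicate point to be the bookkeeping inside these last two steps: one must track which monomial in $\{H_n,H_n^{(2)},H_n^{(3)},\gamma,\zeta(2),\zeta(3)\}$ multiplies which of the four series, so that after substituting the known evaluations of the three auxiliary series every $\gamma$ and every stray $\zeta(2),\zeta(3)$ cancels and precisely the symmetric combination $H_n^3+2H_n^{(3)}+3H_nH_n^{(2)}-H_{n-k}^3-2H_{n-k}^{(3)}-3H_{n-k}H_{n-k}^{(2)}$ together with the three mixed-derivative terms on the right of \eqref{Th211-eq} survives. This cancellation is structurally identical to the one performed in \eqref{Th33-pf1}--\eqref{Th33-pf2} in the proof of Theorem~\ref{Theorem2.3}; nothing new arises beyond replacing the $\Gamma(x+1)\Gamma(z)/\Gamma(x+z)$-type closed form used there by the $\Gamma(x+1)\Gamma(z)/\Gamma(x+z+1)$-type closed form of \eqref{e:25}, so the same ``cube and collect'' manipulation finishes the proof.
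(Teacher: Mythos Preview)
Your approach is exactly what the paper does: its proof reads in full ``The proof would parallel that of Theorem~\ref{Theorem2.3}. The details are omitted.'' Everything you describe --- differentiating the $Q(x)$ of \eqref{e:25} three times in $x$, specialising to $x=n$, splitting the $\psi^3-3\psi\psi'+\psi''$ series at $k=n$, applying \eqref{LemB-eq3} to the tail, and then eliminating $\gamma,\zeta(2),\zeta(3)$ by combining with $Q',Q''$ via the identities behind Theorems~\ref{Theorem2.7} and~\ref{Theorem2.9} --- is precisely that parallel.
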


\begin{proof}
The proof would  parallel  that of Theorem \ref{Theorem2.3}.  The details are omitted.
\end{proof}

\vskip 3mm
\begin{cor} \label{Cor2.12}
Let $p \in \mathbb{C} \setminus \mathbb{Z}_{\leqslant 0}$ and    $m \in \mathbb{N}$. Then
\begin{equation}\label{Cor2.12-eq}
 \aligned
 &\sum_{k=1}^{\infty}\frac{H_{k-1}^2-H_{k-1}^{(2)}}{k(p+k)^{m}}=\sum_{\ell=0}^{m}\,\binom{m}{\ell}\, (-1)^{m-\ell}\, \frac{(m-\ell)!}{z^{m-\ell+1}}\,g^{(\ell)}(p),
  \endaligned
\end{equation}
where
\begin{equation*}
  \aligned
g(z) = &-\gamma^3 -3\gamma\,\zeta (2) -2 \zeta (3) - 3\{ \gamma^2+ \zeta (2)\}\,\psi (z+1)\\
      &+3 \gamma\,\psi' (z+1)- \psi^{(2)}(z+1)\\
      & + 3 \psi (z+1)\,\psi' (z+1)-3 \gamma\,\psi^2 (z+1)-\psi^3 (z+1)
  \endaligned
\end{equation*}
and, for $\ell \in \mathbb{N}$,
\begin{equation*}
  \aligned
g^{(\ell)}(z)=& - 3\{ \gamma^2+ \zeta (2)\}\,\psi^{(\ell)} (z+1)+3 \gamma\,\psi^{(\ell +1)} (z+1)- \psi^{(\ell+2)}(z+1)\\
     &+3\, \sum_{j=0}^{\ell}\, \binom{\ell}{j}\, \psi^{(j +1)} (z+1)\,\psi^{(\ell-j)} (z+1)\\
     &-3\gamma\, \sum_{j=0}^{\ell}\, \binom{\ell}{j}\, \psi^{(j)} (z+1)\,\psi^{(\ell-j)} (z+1)\\
     &- \sum_{k=0}^{\ell}\binom{\ell}{k}\,\left\{\sum_{j=0}^{k}\binom{k}{j}\, \psi^{(j)} (z+1)\,\psi^{(k-j)} (z+1) \right\}
        \psi^{(\ell-k)} (z+1).
   \endaligned
\end{equation*}
\end{cor}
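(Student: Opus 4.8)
The plan is to specialize Theorem~\ref{Theorem2.11} to $n=0$. Since $H_0=H_0^{(2)}=H_0^{(3)}=0$ and $\binom{0+k}{k}=1$, the finite sum $\sum_{k=0}^{0}$ on the right of \eqref{Th211-eq} reduces to its (identically vanishing) $k=0$ term, and the two terms carrying the factors $H_n^2+H_n^{(2)}$ and $H_n$ drop out as well; hence \eqref{Th211-eq} collapses to
\begin{equation*}
\sum_{k=1}^{\infty}\frac{H_{k-1}^2-H_{k-1}^{(2)}}{k(p+k)^{m}}
 =\frac{(-1)^{m}}{3\,(m-1)!}\,\frac{\partial^3}{\partial x^3}\left[\frac{\partial^m}{\partial z^m}\frac{\Gamma(x+1)\Gamma(z)}{\Gamma(x+z+1)}\right]_{x=0,\ z=p}.
\end{equation*}
Just as in the proof of Corollary~\ref{Cor2.4}, the ratio $\Gamma(x+1)\Gamma(z)/\Gamma(x+z+1)$ together with all of its partial derivatives is analytic, hence continuous, in $x$ on $\mathbb{C}\setminus\mathbb{Z}_{\leqslant -1}$ and in $z$ on $\mathbb{C}\setminus\mathbb{Z}_{\leqslant 0}$, so I may interchange the two differentiations and first carry out the $x$-derivative at $x=0$, leaving $\dfrac{d^m}{dz^m}$ of a single function of $z$.

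For that $x$-derivative I would use logarithmic differentiation. Put $F(x,z):=\Gamma(x+1)\Gamma(z)/\Gamma(x+z+1)$, so that $F(0,z)=1/z$ and $\partial_x\log F=\psi(x+1)-\psi(x+z+1)$. Writing $L:=\log F$ one has $\partial_x^3 F=F\bigl(L_x^3+3L_xL_{xx}+L_{xxx}\bigr)$, and at $x=0$, using the values $\psi(1)=-\gamma$, $\psi'(1)=\zeta(2)$, $\psi''(1)=-2\zeta(3)$ (the latter two from \eqref{polygamma}),
\begin{equation*}
\frac{\partial^3 F}{\partial x^3}\bigg|_{x=0}
 =\frac{1}{z}\Bigl[\bigl(-\gamma-\psi(z+1)\bigr)^3
   +3\bigl(-\gamma-\psi(z+1)\bigr)\bigl(\zeta(2)-\psi'(z+1)\bigr)
   -2\zeta(3)-\psi''(z+1)\Bigr].
\end{equation*}
Expanding the cube and collecting the $\gamma$-, $\psi$-, $\psi'$- and $\psi''$-terms shows that the bracket coincides term by term with $g(z)$, so $\partial_x^3 F|_{x=0}=g(z)/z$.

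It then remains to compute $\dfrac{d^m}{dz^m}\!\left[\dfrac{g(z)}{z}\right]$. By the general Leibniz rule together with $\dfrac{d^{m-\ell}}{dz^{m-\ell}}\dfrac{1}{z}=(-1)^{m-\ell}(m-\ell)!\,z^{-(m-\ell+1)}$,
\begin{equation*}
\frac{d^m}{dz^m}\left[\frac{g(z)}{z}\right]
 =\sum_{\ell=0}^{m}\binom{m}{\ell}(-1)^{m-\ell}\frac{(m-\ell)!}{z^{m-\ell+1}}\,g^{(\ell)}(z);
\end{equation*}
evaluating at $z=p$ and restoring the constant $(-1)^m/\bigl(3(m-1)!\bigr)$ inherited from Theorem~\ref{Theorem2.11} produces the sum on the right of \eqref{Cor2.12-eq}. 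The displayed closed forms for $g^{(\ell)}$, $\ell\in\mathbb{N}$, follow by differentiating the expression for $g$ term by term (the constant part vanishes), applying the Leibniz rule to $\psi^2(z+1)$ and to $\psi(z+1)\psi'(z+1)$, reindexing the latter sum $j\mapsto\ell-j$, and applying the iterated Leibniz rule to $\psi^3(z+1)=\psi^2(z+1)\cdot\psi(z+1)$.

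I expect the only genuine work to be the two bookkeeping steps: expanding the third $x$-derivative and verifying term by term that it reproduces $g(z)$, and keeping the nested binomial coefficients straight when differentiating $g$ to obtain the formula for $g^{(\ell)}$. The interchange of the $x$- and $z$-differentiations and the outer Leibniz expansion are routine, being entirely parallel to the arguments already used for Corollaries~\ref{Cor2.2} and~\ref{Cor2.4}.
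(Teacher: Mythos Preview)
Your approach is exactly the paper's: it too simply sets $n=0$ in Theorem~\ref{Theorem2.11}, records
\[
\sum_{k=1}^{\infty}\frac{H_{k-1}^2-H_{k-1}^{(2)}}{k(p+k)^{m}}
=\frac{(-1)^{m}}{3(m-1)!}\left[\frac{d^{m}}{d z^{m}}\left\{\frac{\partial^3}{\partial x^3}\frac{\Gamma(x+1)\Gamma(z)}{\Gamma(x+z+1)}\bigg|_{x=0}\right\}\right]_{z=p},
\]
and stops there; your logarithmic-differentiation computation of $\partial_x^3F|_{x=0}=g(z)/z$ and the Leibniz expansion of $\frac{d^m}{dz^m}\bigl[g(z)/z\bigr]$ actually fill in details the paper leaves implicit. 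One caveat: your Leibniz display is precisely the right-hand side of \eqref{Cor2.12-eq} \emph{before} multiplying by $\frac{(-1)^m}{3(m-1)!}$, so the phrase ``restoring the constant \ldots\ produces the sum on the right'' is not literally true---the stated formula \eqref{Cor2.12-eq} appears to be missing that overall prefactor, a typo in the paper rather than a flaw in your argument.
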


\begin{proof}
  Setting $n=0$ \eqref{Th211-eq} gives
\begin{align*}
\sum_{k=1}^{\infty}\frac{H_{k-1}^2-H_{k-1}^{(2)}}{k(p+k)^{m}}=\frac{(-1)^{m}}{3(m-1)!}\bigg[\frac{d^{m}}{d z^{m}}\bigg\{\frac{\partial^3}{\partial x^3}\frac{\Gamma(x+1)\Gamma(z)}{\Gamma(x+z+1)}\bigg|_{x=0}\bigg\}\bigg]_{z=p}.
\end{align*}
\end{proof}

\section{Particular cases and remarks}\label{PR}

This section demonstrates certain particular instances of our main findings
along with pertinent comments (if any).

\vskip 3mm

\textbf{Example 1.} From \eqref{Cor2.4-eq-a}, one may find that
   \begin{equation}\label{PR-ex1-a}
     \sum_{k=1}^{\infty}\frac{H_k^2-H_k^{(2)}}{k^{m+1}}= \mathtt{S}(1^2; m+1)-\mathtt{S}(2, m+1) \quad (m \in \mathbb{N}),
   \end{equation}
which is a combination of a nonlinear harmonic sum and a linear harmonic sum and can be evaluated in terms of
 Riemann zeta functions for any $m \in \mathbb{N}$. The simple one is
 \begin{equation}\label{PR-ex1-b}
     \sum_{k=1}^{\infty}\frac{H_k^2-H_k^{(2)}}{k^{2}}= \mathtt{S}(1^2; 2)-\mathtt{S}(2, 2)
      = \frac{5}{2}\,\zeta (4).
   \end{equation}
As noted in the paragraph between \eqref{Euler-sum} and \eqref{Euler-sum-Symm},
the linear Euler sum $\mathtt{S}(2, m+1)$
is determined in terms of Riemann zeta functions
only when $m=1$, $m=5$, and $m$ is even. So is   $\mathtt{S}(1^2; m+1)$ for $m=1$, $m=5$, and $m$ is even.
For example,
\begin{equation}\label{PR-ex1-c}
 \mathtt{S}(2, 2) = \frac{7}{4}\, \zeta (4)
\end{equation}
and
\begin{equation}\label{PR-ex1-d}
 \mathtt{S}(1^2; 2)=\frac{17}{4}\, \zeta (4).
\end{equation}
The identity in \eqref{PR-ex1-d} was observed by E. Au-Yeung.
The $\mathtt{S}(1^2; m+1)$ for $m=1$, $m=5$, and $m$ is even
was evaluated in terms of Riemann zeta functions by Borwein et al. \cite{Borwein}
who used the Eulerian beta integral in \eqref{beta} and, also,  by Flajolet and  Salvy   \cite{FlajSalv}
who applied  residue calculus to $\psi$ expansions  such as \eqref{LE-psi} and \eqref{LE-polygamm}.

\vskip 3mm
\textbf{Example 2.}
Setting $p=1$ and $m=0$ in \eqref{Cor26-eq} yields an interesting sum
 which involves harmonic numbers and binomial coefficients:
\begin{equation}\label{ER-ex2-a}
  \sum_{k=1}^{\infty}\frac{\left(2H_{2k}-H_k\right)\binom{2k}{k}}{(k+1)\,4^{k+1}}=1.
\end{equation}
Putting $m=1$ in  \eqref{Cor26-eq} offers
\begin{equation}\label{ER-ex2-b}
\aligned
 & \sum_{k=1}^{\infty}\frac{\left(2H_{2k}-H_k\right)\binom{2k}{k}}{(p+k)^2\,4^{k}}
    = \frac{\Gamma (\tfrac{1}{2}) \Gamma (p)}{\Gamma (p+\tfrac{1}{2})}\\
  &\hskip 10mm \times   \Big[ \{\psi (p)-\psi (p+\tfrac{1}{2})\}\{\psi (\tfrac{1}{2})-\psi (p+\tfrac{1}{2})\}
      -\psi' (p+\tfrac{1}{2}) \Big]
\endaligned
\end{equation}
\begin{equation*}
  \left(p \in \mathbb{C} \setminus \mathbb{Z}_{\leqslant 0},\,\, p \ne \tfrac{1-2k}{2} \,\, (k \in \mathbb{N})  \right).
\end{equation*}
The particular case of \eqref{ER-ex2-b} when $p=\tfrac{1}{2}$ gives
\begin{equation}\label{ER-ex2-b-1/2}
\aligned
 & \sum_{k=1}^{\infty}\frac{\left(2H_{2k}-H_k\right)\binom{2k}{k}}{(k+\tfrac{1}{2})^2\,4^{k}}
    = \pi \left(4\log^22 - \tfrac{\pi^2}{6}  \right).
\endaligned
\end{equation}

Extended parametric harmonic sums involving $H_{qk}$ $(q \in \mathbb{N})$
were investigated in \cite{Sofo-2022}.

\vskip 3mm
\textbf{Example 3.}
Setting $m=0$ in \eqref{Cr28-eq} produces a known identity for the psi function (see, e.g., \cite[p. 24]{Sr-Ch-12}):
\begin{equation}\label{ER-ex3-eq1}
 \sum_{k=1}^{\infty}\, \frac{p}{k(p+k)} = \gamma +\psi (p+1) \quad \left(p \in \mathbb{C} \setminus \mathbb{Z}_{\leqslant -1}\right).
\end{equation}
By applying
  \begin{equation*}
    \frac{1}{k}= \frac{1}{k+p}\,\frac{1}{1-\frac{p}{k+p}}= \sum_{j=0}^{\infty}\, \frac{p^j}{(k+p)^{j+1}} \quad (|p|<|k+p|)
  \end{equation*}
to the left member of \eqref{Cr28-eq}, we may obtain
\begin{equation}\label{ER-ex3-eq2}
  \sum_{k=1}^{\infty}\,\frac{1}{k(k+p)^{m+1}}= \sum_{j=0}^{\infty}\,p^j\, \zeta (m+j+2,p+1) \quad (|p|<|1+p|)
\end{equation}
and
\begin{equation}\label{ER-ex3-eq3}
  \sum_{j=0}^{\infty}\,p^j\, \zeta (m+j+2,p+1)= \frac{\gamma}{p^{m+1}}+\frac{1}{p^{m+1}}\sum_{j=0}^{m}\frac{(-1)^{j}p^j}{j!}\psi^{(j)}(p+1)
\end{equation}
\begin{equation*}
  \left(p \in \mathbb{C} \setminus \mathbb{Z}_{\leqslant 0}, \,\,m \in \mathbb{Z}_{\geqslant 0},\,\,|p|<|1+p|\right).
\end{equation*}
Setting $p=1$ in \eqref{ER-ex3-eq3} provides
\begin{equation}\label{ER-ex3-eq4}
  \sum_{j=2}^{\infty}\,\{\zeta (m+j)-1\}=m+1-\sum_{k=1}^{m}\,\zeta (k+1)
\quad \left(m \in \mathbb{Z}_{\geqslant 0}\right).
\end{equation}
Putting $m=0$ in \eqref{ER-ex3-eq4} offers
\begin{equation}\label{ER-ex3-eq5}
  \sum_{j=2}^{\infty}\,\{\zeta (j)-1\}=1.
\end{equation}
In fact, Shallit and Zikan \cite{Sh-Zi} revealed that
a relatively traditional (more than two centuries old) theorem of
Christian Goldbach (1690--1764), which was given in a letter dated 1729 from
Goldbach to Daniel Bernoulli (1700--1782):
  \begin{equation}\label{GB-Th}
    \sum_{\eta \in E}\,(\eta-1)^{-1} =1 \quad  \big(E:=\big\{n^k \,\big|\,n,\, k \in \mathbb{Z}_{\geqslant 2}\big\}\big)
  \end{equation}
is turned out to be the elegant form \eqref{ER-ex3-eq5}.

  The research topic of series involving zeta functions such as \eqref{ER-ex3-eq3}, \eqref{ER-ex3-eq4}, and \eqref{ER-ex3-eq5}
has been popularly investigated by many researchers who have presented closed form expressions of
a variety of series involving zeta functions and given applications (see, e.g., \cite[Chapter 3]{Sr-Ch-12} and the references cited therein;
for recent ones, see also \cite{Al-Ch-AADM}, \cite{Alzer},  \cite{Choi-ADE}, \cite{Ch-Sr}, \cite{Sofo-2022}).

\vskip 3mm
\textbf{Example 4.} Setting $p=\frac{1}{2}$ in \eqref{Cr2-10-eq} produces
\begin{equation}\label{Cr2-10-eq-a}
  \aligned
  & \sum_{k=2}^{\infty}\frac{H_{k-1}}{k \left(k-\tfrac{1}{2}\right)^{m+1}}
    =2\,\log^22- \zeta (2)\\
  &\hskip 5mm +(-1)^{m+1}\,2^m\,\sum_{\ell=1}^{m}\,(-1)^\ell\,2^{-\ell}\,
    \Big\{(\ell+1)\, (1-2^{\ell +2})\, \zeta (\ell+2) \\
   &\hskip 5mm  +  4\, \log 2\,(2^{\ell +1}-1)\,\zeta (\ell+1)\\
   &\hskip 5mm +  \sum_{j=1}^{\ell-1}\,(2^{j +1}-1)\, (2^{\ell-j +1}-1)\,\zeta (j+1) \, \zeta (\ell-j+1)\Big\}.
  \endaligned
\end{equation}

\bibliographystyle{amsplain}

\end{document}